\newcommand{\Out}{\mathrm{Out}}
\newcommand{\FN}{F_N}
\newcommand{\inv}{^{-1}}
\newcommand{\R}{{\mathbb R}}
\newcommand{\Z}{{\mathbb Z}}
\newcommand{\Hom}{\mathrm {Hom}}
\newcommand{\flow}{\psi}   
\theoremstyle{plain}
\newtheorem{theorem}{Theorem}[section]
\newtheorem{lemma}[theorem]{Lemma}
\newtheorem{corollary}[theorem]{Corollary}
\newtheorem{proposition}[theorem]{Proposition}
\newtheorem{prop-defn}[theorem]{Proposition-Definition}
\newtheorem{claim}[theorem]{Claim}
\newtheorem*{theorem:Lipschitzflows}{Theorem \ref{T:lipschtiz flows}} 
\newtheorem*{theorem:conesections}{Theorem \ref{T:cone_of_sections}}
\newtheorem*{theorem:splittings}{Theorem \ref{T:splittings}}
\newtheorem*{theorem:determinant}{Theorem \ref{T:determinant formula}}
\newtheorem*{theorem:continuity-convexity}{Theorem \ref{T:continuity/convexity again}}
\newtheorem*{theorem:cones-are-equal}{Theorem \ref{T:cones_are_equal}}
\newtheorem*{theorem:transversefoliations}{Theorem \ref{T:transverse foliations}}
\theoremstyle{definition}
\newtheorem{defn}[theorem]{Definition}
\newtheorem{question}[theorem]{Question}
\newtheorem*{remark*}{Remark}
\newtheorem{example}[theorem]{Example}
\let\c@equation\c@theorem
\numberwithin{equation}{section}
\begin{document}

\title[Endomorphisms and train track maps]{Endomorphisms, train track maps, and fully irreducible monodromies}
\author{Spencer Dowdall, Ilya Kapovich, and Christopher J. Leininger}

\address{\tt  Department of Mathematics, Vanderbilt University,
  1326 Stevenson Center, Nashville, TN 37240
 \newline \indent http://www.math.vanderbilt.edu/\~{}dowdalsd/} \email{\tt spencer.dowdall@vanderbilt.edu}

\address{\tt  Department of Mathematics, University of Illinois at Urbana-Champaign,
  1409 West Green Street, Urbana, IL 61801
  \newline \indent http://www.math.uiuc.edu/\~{}kapovich/} \email{\tt kapovich@math.uiuc.edu}

\address{\tt  Department of Mathematics, University of Illinois at Urbana-Champaign,
  1409 West Green Street, Urbana, IL 61801
  \newline \indent http://www.math.uiuc.edu/\~{}clein/} \email{\tt clein@math.uiuc.edu}
  
\begin{abstract}
Any endomorphism of a finitely generated free group naturally descends to an injective endomorphism of its stable quotient. 
In this paper, we prove a geometric incarnation of this phenomenon: namely, that every expanding irreducible train track map inducing an endomorphism of the fundamental group gives rise to an expanding irreducible train track representative of the injective endomorphism of the stable quotient.  
As an application, we prove that the property of having fully irreducible monodromy for a splitting of a hyperbolic free-by-cyclic group depends only on the component of the BNS-invariant containing the associated homomorphism to the integers.
\end{abstract}

\thanks{The first author was partially supported by the NSF
  postdoctoral fellowship, NSF MSPRF no. 1204814. The second author
  was partially supported by the NSF grant DMS-0904200 and by
  the Simons Foundation Collaboration grant no. 279836. The third
  author was partially supported by the NSF grant DMS-1207183 and acknowledges support from NSF grants DMS 1107452, 1107263, 1107367 ``RNMS: Geometric structures And Representation varieties" (the GEAR Network).}

\subjclass[2010]{Primary 20F65, Secondary 57M, 37B, 37E}

\maketitle


\section{Introduction} \label{S:intro}

In the theory of $\Out(\FN)$ train-tracks serve as important tools for understanding free group automorphisms: given an automorphism $\phi$ one strives to find a train track representative (say, via the Bestvina--Handel algorithm) that is useful in analyzing the automorphism. 

In \cite{DKL2}, we naturally encountered train-track maps $f \colon \Theta \to \Theta$ for which $f_* \colon \pi_1(\Theta) \to \pi_1(\Theta)$ was {\sc not} injective (and thus also {\sc not} surjective by the Hopfian property of free groups); other sources that have considered train tracks for endomorphisms of free groups include \cite{DV96,Reynolds,AR}. We showed in \cite{DKL2} that $f_*$ descends to an injective endomorphism $\phi \colon Q \to Q$ of the \emph{stable quotient}
\[ Q = \pi_1(\Theta)/\bigcup_{k \geq 1} \ker(\phi^k).\]
The group $Q$ is also a nontrivial (since $f$ is a train-track map) free group, and in the setting of \cite{DKL2} $\phi$ is often an automorphism.  In this paper, we explain how to produce from any expanding, irreducible train track map $f \colon \Theta \to \Theta$ an honest train track representative $\bar f \colon \bar \Theta \to \bar \Theta$ for $\phi$, and we describe its relationship with $f$.

\begin{theorem} \label{T:promoting train track maps}
Let $f\colon\Theta\to\Theta$ be an expanding irreducible train track map. Let $f_*\colon \pi_1(\Theta)\to \pi_1(\Theta)$ be the free group endomorphism represented by $f$, and let $\phi\colon Q \to Q$ be the induced injective endomorphism of the stable quotient $Q$ of $f_*$.

Then there exists a finite graph $\bar \Theta$ with $\pi_1(\bar \Theta)\cong Q$ (and no valence $1$ vertices), and an expanding irreducible train-track map $\bar f\colon \bar \Theta \to \bar \Theta$ such that  $\bar f_* = \phi$, up to post-composition with an inner automorphism of $Q$.
Furthermore, there exists graph maps $\bar p \colon \bar \Theta \to \Theta$ and $\Phi \colon \Theta \to \bar \Theta$ such that 
\begin{itemize}
\item $\bar f \Phi = \Phi f$ and $\bar p \bar{f} = f \bar p$, and
\item $\bar p \Phi = f^K$ and $\Phi \bar p = \bar f^K$, for some $K \geq 1$.
\end{itemize}
\end{theorem}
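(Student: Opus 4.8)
The plan is to realize the stable quotient $Q$ geometrically by iterating $f$ and collapsing the "eventually collapsed" part of $\Theta$. Let me think about what structure $f$ induces. Since $f \colon \Theta \to \Theta$ is a train track map with $f_*$ possibly non-injective, the kernel $\bigcup_k \ker(f_*^k)$ is carried by a subgraph: more precisely, I expect there is an $f$-invariant subgraph $Z \subseteq \Theta$ (the "collapsing" or "pretrivial" part) such that $f^k$ eventually maps each edge of $\Theta$ either to a legal path or into $Z$, and $\pi_1(Z)$ (or the subgroup it carries, after accounting for how $Z$ meets the rest) generates exactly the kernel. Here I would use the expanding hypothesis: the transition matrix is Perron–Frobenius after restricting to the irreducible non-collapsing part, and edges not contributing to growth must be collapsed under iteration.

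**Constructing $\bar\Theta$ and $\bar f$.** Replacing $f$ by a power if necessary (this is harmless, as one recovers the original $\phi$ up to the bookkeeping, and the final $K$ absorbs this), I would arrange that $f$ maps $Z$ to $Z$ and that the "return" behavior stabilizes. Then set $\bar\Theta$ to be the quotient of $\Theta$ obtained by collapsing each component of $Z$ to a point, with $P \colon \Theta \to \bar\Theta$ the quotient map. Since $f(Z) \subseteq Z$, the map $f$ descends to a map $\bar f \colon \bar\Theta \to \bar\Theta$ with $\bar f P = P f$ by construction. One then checks: (i) $\bar\Theta$ has no valence-one vertices, possibly after a further elementary collapse/homotopy, because any valence-one vertex would come from a bare "hair" of $\Theta$ which the expanding train track structure forbids or which can be pruned; (ii) $\bar f_* = \phi$ up to inner automorphism, since collapsing $Z$ realizes exactly the quotient $\pi_1(\Theta) \to \pi_1(\Theta)/\bigcup_k\ker(f_*^k) = Q$ at the level of $\pi_1$, using the identification of $\ker$ with the subgroup carried by $Z$ from the previous step; (iii) $\bar f$ is again an expanding irreducible train track map — legality of turns and the no-cancellation property descend since collapsing an $f$-invariant subgraph cannot create illegal turns among the surviving edges, and irreducibility/expansion follow from Perron–Frobenius on the surviving transition matrix, which is precisely the non-collapsing block.

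**The splitting maps and the relation $\bar p P = f^K$.** For the converse direction I would build $\bar p \colon \bar\Theta \to \Theta$ by choosing, for each vertex $\bar v$ of $\bar\Theta$ (i.e., each collapsed component $Z_i$ together with the surviving vertices), a point in $\Theta$, and for each edge of $\bar\Theta$ a path in $\Theta$ in the appropriate homotopy class; the natural choice is $\bar p = $ "the image of a surviving edge under a large enough power of $f$, read back into $\Theta$," which is what makes $\bar p \bar f = f \bar p$ hold and simultaneously forces the compositions $\bar p P$ and $P \bar p$ to be powers of $f$ and $\bar f$ respectively. Concretely, after passing to a power so that everything has stabilized, $f^K$ factors through the collapse $P$ followed by a section-like map $\bar p$; the key point is that $f^K$ kills no more and no less of $\pi_1$ than $P$ does, once $K$ is large enough that $\ker(f_*^K) = \bigcup_k \ker(f_*^k)$ (such $K$ exists since $\pi_1(\Theta)$ is Noetherian as a f.g. free group — the ascending chain of kernels stabilizes). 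Then $\bar p$ is defined as the induced map on the quotient, and $\bar p P = f^K$, $P\bar p = \bar f^K$ follow by chasing the definitions.

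**Main obstacle.** The hardest part is item (iii): showing $\bar f$ is genuinely an expanding \emph{train track} map, not merely a topological representative of $\phi$. Collapsing $Z$ could a priori introduce backtracking in $\bar f^n$ along the images of surviving edges — an illegal turn "hidden" inside $Z$ before the collapse could become a cancellation afterward. I would need a careful argument that, after passing to a sufficient power of $f$ and choosing $Z$ maximally (all edges that die under iteration), the turns taken by $\bar f$ on $\bar\Theta$ are exactly the images of legal turns of $f$ on the non-collapsing part, and legality is preserved. This likely requires understanding precisely how $f$ interacts with $Z$: that $f$ never "folds" a surviving edge into $Z$ in a way that reverses across a vertex, which should follow from maximality of $Z$ and the train track (no-cancellation under iteration) property of $f$ itself. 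Controlling the valence-one vertices (item (i)) and pinning down the inner-automorphism ambiguity (choice of basepoint/paths in defining $\bar p$) are comparatively routine bookkeeping.
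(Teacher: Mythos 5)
There is a genuine gap, and it is at the very first step: the invariant subgraph $Z$ that your whole construction rests on does not exist. You want a proper $f$--invariant subgraph $Z\subseteq\Theta$ ``carrying'' $\bigcup_k\ker(f_*^k)$, and you propose to obtain $\bar\Theta$ by collapsing it. But the hypothesis that $f$ is \emph{irreducible} rules this out outright: if $Z$ were a proper subgraph containing an edge $e'$ with $f(Z)\subseteq Z$, then for any edge $e\notin Z$ no power $f^t(e')$ could cross $e$, contradicting irreducibility of the transition matrix. So for the maps covered by the theorem the only $f$--invariant subgraphs with edges are $\Theta$ itself, and there is no ``collapsing block'' versus ``surviving block'' in $A(f)$ --- the matrix is irreducible and expanding as a whole. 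A concrete example: let $\Theta$ be the rose on $\{a,b\}$ and $f(a)=f(b)=ab$. This is an expanding irreducible train track map (all iterates are positive words, and $A(f)>0$), $f_*$ is non-injective with $\ker$ normally generated by $ab^{-1}$, and the stable quotient is $Q\cong\Z$ with $\phi$ the injective endomorphism $x\mapsto x^2$. No subgraph of the rose is $f$--invariant, and no subgraph carries the kernel; collapsing either petal produces the wrong group and the wrong endomorphism. More generally, the kernel of an endomorphism induced by a graph map is a normal subgroup that is typically not the normal closure of the fundamental group of any subgraph, so ``$\pi_1(Z)$ generates exactly the kernel'' is not something you can arrange even after passing to powers.

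The fix is to realize $Q$ not as a quotient complex of $\Theta$ but via the \emph{image}: since $Q$ is isomorphic to the stable image $J=f_*^{K}(\pi_1(\Theta))$ (with $\phi$ conjugate to $f_*|_J$), one takes the cover $p\colon\widetilde\Theta\to\Theta$ corresponding to $J$, lets $\bar\Theta$ be its Stallings core, and defines $\bar f$ by lifting $f$ (which is possible because $f_*(J)$ lies in a conjugate of $J$) and restricting to the core; legality upstairs is automatic because $p$ is a local homeomorphism, so ``$f^n|_e$ immersed'' lifts for free --- exactly the issue you flag as your ``main obstacle'' and which the collapsing approach cannot resolve. In this setting $\bar p=p|_{\bar\Theta}$, and $P\colon\Theta\to\bar\Theta$ is a lift of a sufficiently high power $f^K$ (using that $f_*^{K}(\pi_1(\Theta))$ lies in the relevant conjugate of $J$); the identities $\bar p\,\bar f=f\,\bar p$, $\bar f P=Pf$, $\bar p P=f^K$ and $P\bar p=\bar f^K$ then follow from uniqueness of lifts, and irreducibility and expansion of $\bar f$ are deduced from those of $f$ by pushing edges back and forth with $P$ and $\bar p$. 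Your Noetherian/stabilization observation ($\ker(f_*^k)$ stabilizes) is correct and is indeed used, but it enters in choosing $K$ and the subgroup $J$, not in locating a collapsible subgraph.
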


As an application, we have the following theorem about the \emph{Bieri-Neumann-Strebel invariant} for free-by-cyclic groups (see \cite{BNS,Levitt,BG04,CL} for background information on the BNS-invariant).  
To state it, recall that a group homomorphism $u\in \Hom(G,\R) = H^1(G;\R)$ is \emph{primitive integral} if $u(G) = \Z$ and that the \emph{monodromy} $\phi_u\in \Out(\ker(u))$ of such a homomorphism is the generator of the action of $\Z$ on $\ker(u)$ defining the semi-direct product structure $G = \ker(u) \rtimes_{\phi_u}\Z$. Recall also that the BNS-invariant $\Sigma(G)$ of $G$ \cite{BNS} is an open subset of the positive projectivization,
\[ \Sigma(G) \subset (H^1(G;\R) - \{0\})/\R_+, \]
which captures finite generation properties; for example, a primitive integral class $u\in H^1(G;\R)$ has $\ker(u)$ finitely generated if and only if $u,-u\in \Sigma(G)$.

\begin{theorem} \label{T:same component, all iwip}
Suppose $G$ is a hyperbolic group, $\Sigma_0(G)$ a component of the BNS-invariant, and $u_0,u_1 \in H^1(G;\R)$ primitive integral classes projecting into $\Sigma_0(G)$ with $\ker(u_0),\ker(u_1)$ finitely generated.  Then $\ker(u_0)$ is free with fully-irreducible monodromy $\phi_{u_0}$ if and only if $\ker(u_1)$ is free with fully irreducible monodromy $\phi_{u_1}$. 
\end{theorem}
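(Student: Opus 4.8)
Call a primitive integral class $w$ projecting into $\Sigma_0(G)$ \emph{good} if $\ker(w)$ is finitely generated and free and $\phi_w$ is fully irreducible. Since $\ker(u_0)$ and $\ker(u_1)$ are already assumed finitely generated, $u_i$ is good precisely when $\ker(u_i)$ is free with $\phi_{u_i}$ fully irreducible; so by symmetry it suffices to prove that if $u_0$ is good then $u_1$ is good, and I would in fact prove the stronger assertion that if $u_0$ is good then \emph{every} primitive integral class projecting into $\Sigma_0(G)$ is good. To begin, since $G$ is hyperbolic and $\ker(u_0)$ is finitely generated, $G = \ker(u_0)\rtimes_{\phi_{u_0}}\Z$ and Brinkmann's theorem forces $\phi_{u_0}$ to be atoroidal (no nontrivial periodic conjugacy classes); being also fully irreducible it is exponentially growing, so Bestvina--Handel supplies an expanding irreducible train track representative $f_0\colon\Theta_0\to\Theta_0$ of $\phi_{u_0}$ (with $f_{0*}$ an automorphism).

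The next step is to bring in the Lipschitz flow machinery built from $f_0$: the folded mapping torus $X_0$ of $f_0$ carries a semiflow and identifies $\pi_1(X_0)\cong G$ so that $u_0$ is the class of projection onto the flow direction, and the associated cone $\mathcal{A}\subset H^1(X_0;\R)=H^1(G;\R)$ of cohomology classes positive on the flow is an open rational polyhedral cone containing $u_0$ such that (cf.\ Theorem~\ref{T:cone_of_sections}) every primitive integral $u\in\mathcal{A}$ is carried by a genuine cross section of the flow whose expanding irreducible first-return map represents $\phi_u$; for these $u$ one knows $\ker(u)$ is finitely generated free and $\phi_u$ is fully irreducible, i.e.\ $u$ is good. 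Because $\mathcal{A}$ is connected and every primitive integral class in it has finitely generated kernel, hence lies in $\Sigma(G)$, the whole cone $\mathcal{A}$ projects into the single component $\Sigma_0(G)$, so $\mathcal{A}\subseteq\mathcal{C}_0$, where $\mathcal{C}_0$ denotes the cone over $\Sigma_0(G)$. It now suffices to prove the reverse inclusion $\mathcal{C}_0\subseteq\mathcal{A}$: then every primitive integral class in $\mathcal{C}_0$, in particular $u_1$, lies in $\mathcal{A}$ and is good. Since $\mathcal{C}_0$ is connected and $\mathcal{A}$ is open, this reduces to showing $\mathcal{A}$ is closed in $\mathcal{C}_0$, i.e.\ that $\partial\mathcal{A}\cap\mathcal{C}_0=\varnothing$.

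This is the step where Theorem~\ref{T:promoting train track maps} is essential. Suppose $\partial\mathcal{A}\cap\mathcal{C}_0\neq\varnothing$; because $\partial\mathcal{A}$ is rational polyhedral and $\Sigma_0(G)$ is open, this set contains a primitive integral class $v$. Specializing the semiflow on $X_0$ to the boundary class $v$ (via the folding and collapsing operations underlying the constructions described in the introduction) produces a finite graph $\Theta_v$ and an expanding irreducible train track map $f_v\colon\Theta_v\to\Theta_v$ whose induced endomorphism $f_{v*}$ need \emph{not} be injective --- exactly the phenomenon this paper is designed to handle --- but whose stable quotient is $\ker(v)$, with induced injective endomorphism $\phi_v$. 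Theorem~\ref{T:promoting train track maps} then yields an expanding irreducible train track representative $\bar f_v$ of $\phi_v$ on a finite graph, so $\ker(v)\cong\pi_1(\bar\Theta_v)$ is finitely generated free and $\phi_v$ is irreducible; a supplementary argument --- using that $\phi_v$ is atoroidal (as $G$ is hyperbolic and $[v]\in\Sigma(G)$) and exponentially growing, together with fillingness of the attracting lamination inherited from $X_0$ --- promotes this to: $\phi_v$ is fully irreducible. Hence $v$ is good, and applying the flow construction to a train track representative of $\phi_v$ gives an open cone $\mathcal{A}'\ni v$ of good classes. But $v$ and $u_0$ project into the same component of $\Sigma(G)$, so Theorem~\ref{T:cones_are_equal} forces $\mathcal{A}'=\mathcal{A}$, contradicting $v\in\partial\mathcal{A}$. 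Therefore $\partial\mathcal{A}\cap\mathcal{C}_0=\varnothing$, $\mathcal{A}=\mathcal{C}_0$, and $u_1$ is good.

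I expect the principal obstacle --- and the technical core of the proof --- to be the boundary step: carrying out the flow construction at a rational boundary class $v$ of $\mathcal{A}$ so as to obtain a finite graph and an expanding irreducible train track map whose stable quotient is \emph{exactly} $\ker(v)$ with monodromy $\phi_v$, and then correctly feeding this into Theorem~\ref{T:promoting train track maps}. It is precisely the non-injectivity of $f_{v*}$ at such boundary classes that takes the situation outside classical train track theory and makes Theorem~\ref{T:promoting train track maps} unavoidable here; a secondary difficulty is upgrading the \emph{irreducibility} provided by $\bar f_v$ to \emph{full irreducibility} of $\phi_v$, which must be read off from the semiflow dynamics rather than from $\bar f_v$ in isolation.
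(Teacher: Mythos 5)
Your overall strategy departs from the paper's and, as written, aims at a statement that is false in general. You try to show that the DKL cone $\mathcal A$ built from a representative of $\phi_{u_0}$ exhausts the whole cone $\mathcal C_0$ over $\Sigma_0(G)$, so that \emph{every} primitive integral class in the component is ``good.'' But the projectivization of $\mathcal A$ lies in $\Sigma(G)\cap-\Sigma(G)$ and every primitive integral class in $\mathcal A$ has finitely generated kernel, whereas a component $\Sigma_0(G)$ may well contain primitive integral classes $u$ with $-u\notin\Sigma(G)$, i.e.\ with $\ker(u)$ not finitely generated; such classes can never lie in $\mathcal A$, so $\mathcal A=\mathcal C_0$ cannot be the right target (the theorem deliberately restricts to classes with finitely generated kernel). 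This defect surfaces concretely at your boundary step: for a primitive integral $v\in\partial\mathcal A\cap\mathcal C_0$ you identify the stable quotient of $(f_v)_*$ with $\ker(v)$ and treat $\phi_v$ as an atoroidal automorphism, but that identification requires $\ker(v)$ to be finitely generated, which is not known (and typically fails) for such $v$. Moreover the ``cones are equal'' statement you invoke to force $\mathcal A'=\mathcal A$ is not a result of this paper, and the companion result it echoes identifies the cone of \emph{sections} of one fixed folded mapping torus with the cone over the BNS component -- it does not say that the DKL cones attached to two different splittings coincide, which is essentially the statement you are trying to prove. Finally, the genuinely hard point -- upgrading irreducibility of the promoted train track map to \emph{full} irreducibility of the monodromy -- is dispatched in your proposal with a one-sentence appeal to atoroidality and ``fillingness of the attracting lamination,'' which is not an argument; in the paper this is Theorem~\ref{T:other section iwip tech}, whose proof occupies the last three sections (flow-homotopy equivalences between $M_{\bar f_\Theta}$ and $X_f$, lifting them to finite covers, producing a proper flow-invariant subcomplex, and contradicting the Bestvina--Feighn--Handel lifting theorem for fully irreducible automorphisms).

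The paper's actual route avoids any open-closed argument in the cone: since $u_1$ projects into the same component $\Sigma_0(G)$ and $\ker(u_1)$ is finitely generated by hypothesis, the results of \cite{DKL2} already provide a cross section $\Theta\subset X_f$ dual to $u_1$ whose first return map $f_\Theta$ is an expanding irreducible train track map with possibly non-injective $(f_\Theta)_*$, and whose stable quotient is exactly $\phi_{u_1}$ on the free group $\ker(u_1)$. Theorem~\ref{T:promoting train track maps} is then applied to this section (not to a boundary class of $\mathcal A$), and all the remaining work goes into proving that $(\bar f_\Theta)_*$ is fully irreducible. You correctly sensed where Theorem~\ref{T:promoting train track maps} must enter -- at classes where the first return map fails to be $\pi_1$-injective -- but you located those at $\partial\mathcal A$ instead of at arbitrary finitely generated integral classes of the BNS cone, and you left the full-irreducibility step, which is the technical core, essentially unaddressed.
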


The fact that $\ker(u_0)$ is free if and only if $\ker(u_1)$ is free follows from \cite{GMSW}.  The point of the theorem is that the monodromy of $u_0$ is fully irreducible if and only if the monodromy for $u_1$ is.  
The proof of Theorem~\ref{T:same component, all iwip} builds on our papers  \cite{DKL,DKL2} which developed new machinery for studying dynamical aspects of free-by-cyclic groups by exploiting properties of natural semi-flows on associated folded mapping tori 2--complexes; see also \cite{AHR} for related work.

Since full irreducibility is preserved by taking inverses, Theorem~\ref{T:same component, all iwip} yields  the following corollary.

\begin{corollary} \label{C:connected union, all iwip}
Suppose $G$ is a hyperbolic group and that $\Sigma(G) \cup -\Sigma(G)$ is connected.  Then for any two primitive integral $u_0,u_1 \in H^1(G;\R)$ with finitely generated, free kernels, $\phi_{u_0}$ is fully irreducible if and only if $\phi_{u_1}$ is fully irreducible.
\end{corollary}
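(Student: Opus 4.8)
The plan is to deduce the corollary from Theorem~\ref{T:same component, all iwip} by a chaining argument, using the connectedness hypothesis to link $u_0$ to $u_1$ through a sequence of primitive integral classes, and using the inverse-invariance of full irreducibility to pass between $\Sigma(G)$ and $-\Sigma(G)$.

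I would work with the predicate $P(u)$: ``$\ker(u)$ is free and $\phi_u$ is fully irreducible'', defined for primitive integral $u\in H^1(G;\R)$ with $\ker(u)$ finitely generated. Since $\ker(u_0)$ and $\ker(u_1)$ are free by hypothesis, it suffices to prove $P(u_0)\Leftrightarrow P(u_1)$. The predicate $P$ enjoys two invariances: (a) if $u,u'$ are primitive integral with finitely generated kernels whose projective classes lie in a common component of $\Sigma(G)$, then $P(u)\Leftrightarrow P(u')$, which is exactly Theorem~\ref{T:same component, all iwip}; and (b) $P(u)\Leftrightarrow P(-u)$, since $\ker(-u)=\ker(u)$, $\phi_{-u}=\phi_u\inv$, and full irreducibility is preserved under inversion. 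So it is enough to produce a finite chain of primitive integral classes with finitely generated kernels running from $u_0$ to $u_1$ whose consecutive terms are related by (a) or (b).

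To build such a chain, I would first note that, since $\ker(u_j)$ is finitely generated, the criterion recalled in the introduction gives $u_j,-u_j\in\Sigma(G)$, so $[u_j]\in\Sigma(G)\cap(-\Sigma(G))$; let $C_j$ be the component of $\Sigma(G)$ containing $[u_j]$. The set $\Sigma(G)\cup-\Sigma(G)$ is a connected open subset of the sphere $(H^1(G;\R)\setminus\{0\})/\R_+$, covered by the components of $\Sigma(G)$ together with those of $-\Sigma(G)$, all open and connected; as the relation ``joined by a finite chain of consecutively overlapping members of this cover'' has open equivalence classes partitioning a connected space, every two members of the cover are so joined, giving a finite chain $C_0=D_0,D_1,\dots,D_m=C_1$ of such components with $D_i\cap D_{i+1}\neq\emptyset$ for each $i$. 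Discarding repetitions and using that distinct components of $\Sigma(G)$ are disjoint, as are distinct components of $-\Sigma(G)$, I may assume the chain alternates between components of $\Sigma(G)$ and components of $-\Sigma(G)$. Then each nonempty open overlap $D_i\cap D_{i+1}$ lies in $\Sigma(G)\cap(-\Sigma(G))$, so it contains the projective class of some primitive integral $c_i$ (primitive integral classes being dense in the sphere), and then $c_i,-c_i\in\Sigma(G)$, so $\ker(c_i)$ is finitely generated.

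Finally I would assemble the chain $u_0,c_0,c_1,\dots,c_{m-1},u_1$. The pair $u_0,c_0$ has both projective classes in $D_0=C_0\subseteq\Sigma(G)$, so (a) applies, and similarly for $c_{m-1},u_1$ via $D_m=C_1$. For an interior pair $c_{i-1},c_i$, both projective classes lie in $D_i$: if $D_i$ is a component of $\Sigma(G)$, apply (a) directly; if $D_i$ is a component of $-\Sigma(G)$, then $[-c_{i-1}]$ and $[-c_i]$ lie in the component $-D_i$ of $\Sigma(G)$, so $P(-c_{i-1})\Leftrightarrow P(-c_i)$ by (a), hence $P(c_{i-1})\Leftrightarrow P(c_i)$ by two applications of (b). Composing these equivalences gives $P(u_0)\Leftrightarrow P(u_1)$, as required. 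The only step requiring real care is the observation that a component of $\Sigma(G)$ meets a component of $-\Sigma(G)$ only inside $\Sigma(G)\cap(-\Sigma(G))$, which by the finite-generation criterion is precisely the locus of classes with finitely generated kernels --- this is what guarantees that Theorem~\ref{T:same component, all iwip} is genuinely applicable at each link; everything else is the connectedness argument above and routine bookkeeping.
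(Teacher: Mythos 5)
Your proof is correct, and it reaches the corollary by a slightly different packaging of the same two ingredients the paper uses: Theorem~\ref{T:same component, all iwip}, which makes the predicate $P(u)$ constant among primitive integral classes with finitely generated kernel projecting into a fixed component of $\Sigma(G)$, and the symmetry $P(u)\Leftrightarrow P(-u)$ coming from $\ker(-u)=\ker(u)$ and $\phi_{-u}=\phi_u^{-1}$. Where you differ is in how connectedness of $\Sigma(G)\cup-\Sigma(G)$ is exploited: you invoke the chain characterization of connectedness for the open cover by components of $\Sigma(G)$ and of $-\Sigma(G)$, and you plant explicit primitive integral witnesses $c_i$ (via density of integral classes, with finitely generated kernels guaranteed by $c_i,-c_i\in\Sigma(G)$) in each overlap, then propagate $P$ along the chain. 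The paper instead two-colors the components of $\Sigma(G)$ as fully irreducible or non-fully irreducible (well defined by Theorem~\ref{T:same component, all iwip}), shows the symmetrized unions $\Sigma_0(G)\cup-\Sigma_0(G)$ of opposite colors are disjoint (using the same inversion symmetry, and implicitly the same density fact to extract a primitive integral class from a nonempty open intersection), and concludes that the two resulting open sets partition the connected set $\Sigma(G)\cup-\Sigma(G)$, so one is empty. The arguments are logically equivalent; the paper's version avoids your bookkeeping about alternating chains and discarded repetitions (which is harmless but unnecessary -- all you really need is that a nonempty intersection of a component of $\Sigma(G)$ with a component of $-\Sigma(G)$ lies in $\Sigma(G)\cap-\Sigma(G)$ and so contains a usable integral class), while your version has the merit of making explicit the density of primitive integral classes, a point the paper uses silently.
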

\begin{proof}
Consider a component $C$ of $\Sigma(G)$. By Theorem~\ref{T:same component, all iwip}, either every primitive integral $u\in H^1(G;\R)$ projecting into $C$ with $\ker(u)$ finitely generated has the property that $\ker(u)$ is free and $\phi_u$ is fully irreducible, or else no such $u$ projecting into $C$ has this property. Say that $C$ is a \emph{fully irreducible component} in the former case and that it is a \emph{non-fully irreducible component} in the latter.
Now if $\Sigma_0(G)$ is a fully irreducible component and $\Sigma_1(G)$ a non-fully irreducible component, then observe that $(\Sigma_0(G) \cup -\Sigma_0(G)) \cap (\Sigma_1(G) \cup -\Sigma_1(G)) = \emptyset$.  For, if not, then there exists a primitive integral $u$ with finitely generated kernel and $\phi_u$ fully irreducible, such that $-u$ lies in $\Sigma_1(G)$.  Since $\phi_u$ is fully irreducible if and only $\phi_{-u} = \phi_u^{-1}$ is, this is a contradiction.

Now let $\mathcal F(G) \subset \Sigma(G) \cup -\Sigma(G)$ denote the union of open sets $\Sigma_0(G) \cup -\Sigma_0(G)$, over all fully irreducible components $\Sigma_0(G)$, and let $\mathcal N(G) \subset \Sigma(G) \cup -\Sigma(G)$ be the union of open sets $\Sigma_1(G) \cup -\Sigma_1(G)$ over all non-fully irreducible components $\Sigma_1(G)$.  The open sets $\mathcal F(G)$ and $\mathcal N(G)$ cover $\Sigma(G) \cup -\Sigma(G)$ and are disjoint by the previous paragraph, hence one must be empty and the corollary follows.
\end{proof}

For the case that $G = \pi_1(M)$, where $M$ is a finite volume hyperbolic $3$--manifold, considerations of the Thurston norm \cite{ThuN} imply that $\Sigma(G)=-\Sigma(G)$ is projectively equal to a finite union of top-dimensional faces of the polyhedral Thurston norm ball in $H^1(M;\R)$ (c.f.~\cite{BNS}); thus here $\Sigma(G)\cup -\Sigma(G)$ is never connected unless it is empty.  However, for hyperbolic free-by-cyclic groups $G$ it can easily happen that $\Sigma(G) \cup -\Sigma(G)$ is connected and nonempty: In the main example of \cite{DKL2}, one may easily apply Brown's algorithm \cite[Theorem 4.4]{Brown} to the presentation \cite[Equation 3.4]{DKL2} to calculate that $\Sigma(G)$ contains all rays in $H^1(G;\R)\cong\R^2$ except for those in the directions $(-1,0)$, $(1,2)$, and $(1,-2)$ (as in \cite{DKL2}, we work with left actions, so we must take the negative of the result of applying Brown's algorithm). 
The cone $\EuScript{S}$ calculated in \cite[Example 8.3]{DKL2} is one component of $\Sigma(G)$, and the vector $u_1=(-1,2)\in \Sigma(G)$ satisfies $-u_1\notin\Sigma(G)$; see \cite[Figure 8]{DKL2}. In particular, we see that $\Sigma(G)\cup-\Sigma(G)$ is the entire positive projectivization of $H^1(G;\R)\setminus \{0\}\cong \R^2\setminus \{0\}$, and is thus connected.

Theorem~\ref{T:same component, all iwip} extends and generalizes our earlier result \cite[Theorem~C]{DKL}. There we considered a hyperbolic free-by-cyclic group $G=F_N\rtimes_{\phi_0} \Z$ with fully irreducible monodromy $\phi_0\in \Out(F_N)$ and constructed an open convex cone $\mathcal A\subseteq H^1(G;\R)$ containing the projection $F_N\rtimes_{\phi_0}\Z\to\Z$ and whose projectivization is contained in $\Sigma(G) \cap -\Sigma(G)$. 
Among other things, \cite[Theorem C]{DKL} showed that for every primitive integral $u\in \mathcal A$ the splitting $G = \ker(u)\rtimes_{\phi_u}\Z$ has finitely generated free kernel $\ker(u)$ and fully irreducible monodromy $\phi_u\in\Out(\ker(u))$.

The proofs of \cite[Theorem~C]{DKL} and  Theorem~\ref{T:same component, all iwip} are fairly different, although both exploit the dynamics of a natural semi-flow on the \emph{folded mapping torus} $X_f$ constructed from a train-track representative $f\colon\Gamma\to\Gamma$ of $\phi_0$. Our proof of \cite[Theorem~C]{DKL} starts by establishing the existence of a cross-section $\Theta_u\subseteq X_f$ dual to each primitive integral $u\in\mathcal A$ such that the first return map $f_u\colon\Theta_u\to\Theta_u$ is a train-track representative of $\phi_u$. We then used the fine structure of the semi-flow (derived from the train map $f$ and the fully irreducible atoroidal assumption on $\phi_0$) to conclude that $f_u$ is expanding and irreducible and has connected Whitehead graphs for all vertices of $\Theta_u$. 
This, together with the word-hyperbolicity of $G$, allowed us to apply a criterion obtained in \cite{K14} to conclude that $\phi_u$ is fully irreducible.

The proof of Theorem~\ref{T:same component, all iwip} starts similarly. Given $G=F_N\rtimes_{\phi_0} \Z$ as above and an epimorphism $u\colon G\to\Z$ in the same component of $\Sigma(G)$ as $F_N\rtimes_{\phi_0}\Z\to\Z$ and with $\ker(u)$ being finitely generated (and hence free), we use our results from \cite{DKL2} to find a section $\Theta_u\subseteq X_f$ dual to $u$ such that the first return map $f_u\colon\Theta_u\to\Theta_u$ is an expanding irreducible train track map. However, now $(f_u)_\ast$ is a possibly non-injective endomorphism of $\pi_1(\Theta_u)$. We thus pass to the stable quotient of $(f_u)_*$, which we note is equal to the monodromy automorphism $\phi_u\in\Out(\ker(u))$ since $\ker(u)$ is finitely generated. We then apply Theorem~\ref{T:promoting train track maps} to obtain an expanding irreducible train-track representative $\bar{f}_u\colon\bar\Theta_u\to \bar\Theta_u$ and use the provided maps $\bar\Theta_u \leftrightarrows \Theta_u$ to construct a pair of flow-equivariant homotopy equivalences $M_{\bar{f}_u} \leftrightarrows X_f$ with additional nice properties; here $M_{\bar{f}_u}$ is the mapping torus of $\bar{f}_u$. Supposing that $\phi_u = (\bar{f}_u)_*$ were not fully irreducible, we then find a proper nontrivial flow-invariant subcomplex in a finite cover of $M_{\bar{f}_u}$ which, via the equivalences $M_{\bar{f}_u}\leftrightarrows X_f$, gives rise to a proper nontrivial flow-invariant subcomplex of some finite cover of $X_f$. From here we deduce the existence of a finite cover $\Delta\to\Gamma$ and a lift $h\colon\Delta\to\Delta$ of some positive power of $f$ such that $\Delta$ admits a proper nontrivial $h$--invariant subgraph. But by a general result of Bestvina--Feighn--Handel \cite{BFH97}, this conclusion contradicts the assumption that $\phi_0 = f_*$ is fully irreducible.

Our proof of Theorem~\ref{T:same component, all iwip} uses the assumption that $u_1$ and $u_2$ lie in the same component of $\Sigma(G)$ to conclude, via the results of \cite{DKL2}, that both splittings of $G$ come from cross sections of a single $2$--complex equipped with a semi-flow. It is therefore unlikely that this approach will lead to any insights regarding splittings in different components of $\Sigma(G)$. Nevertheless, we ask:

\begin{question}
Can Theorem~\ref{T:same component, all iwip} be extended to remove the hypothesis that $u_1$ and $u_2$ lie in the same component of the BNS-invariant $\Sigma(G)$?
\end{question}

\noindent {\bf Acknowledgements:} The authors would like to thank the referee for carefully reading an earlier version of the paper and providing helpful suggestions that improved the exposition.

\section{Induced train track maps -- general setting} \label{S:induced_tt}

Let $\Theta$ be a finite graph with no valence $1$ vertices, and let $f\colon\Theta\to\Theta$ be a graph map (as in \cite[Definition 2.1]{DKL}). Recall from \cite[\S2]{DKL} that the $(e',e)$--entry of the \emph{transition matrix} $A(f)$ of $f$ records the total number of occurrences of the edge $e^{\pm1}$ in the edge path $f(e')$. The transition matrix $A(f)$ is \emph{positive} (denoted $A(f) > 0$) if every entry is positive and is \emph{irreducible} if for every ordered pair $(e',e)$ of edges of $\Theta$ there exists $t\ge 1$ such that the $(e',e)$--entry of $A(f)^t$ is positive. We say that $f$ is \emph{irreducible} if its transition matrix $A(f)$ is irreducible, and that $f$ is \emph{expanding} if for each edge $e$ of $\Theta$ the edge paths $f^n(e)$ have combinatorial length tending to $\infty$ with $n$. In this paper, as in \cite{DKL2}, we use the term ``train-track map'' to mean the following:

\begin{defn}[Train-track map]\label{def:train-track}
A \emph{train-track map} is a graph map $f\colon \Theta\to \Theta$ such that:
\begin{itemize}
\item the map $f$ is surjective, and
\item for every edge $e$ of $\Theta$ and every $n\ge 1$ the map $f^n\vert_e$ is an immersion.
\end{itemize}
\end{defn}

Note that, unlike the original definition \cite{BH92}, our definition of train-track maps allows for valence $2$ vertices in $\Theta$. Lemma 2.12 of \cite{DKL} shows that train-track maps must be locally injective at each valence $2$, thus the presence of valence $2$ vertices does not lead to any complications.

Our Definition~\ref{def:train-track} differs from the traditional setting in another important way; namely, we do not require a train-track map $f\colon\Theta\to\Theta$ to be a homotopy equivalence. 
Thus $f_*$ need only determine an endomorphism of $\pi_1(\Theta)$, in which case $f$ is not a topological representative of any outer automorphism of $\pi_1(\Theta)$.

Nevertheless in \cite[\S4]{DKL2} we saw that an arbitrary endomorphism $\varphi\colon F_N\to F_N$ of a finite-rank free group naturally gives rise to an \emph{injective} endomorphism $\bar{\varphi}$ of the quotient group
\[ Q = F_N/\bigcup_{k \geq 1} \ker(\varphi^k). \]
In fact, the kernels stabilize after finitely many, say $K$, steps so that $\bigcup_{k \geq 1} \ker(\varphi^k) = \ker(\varphi^K)$.  Then $Q$ is isomorphic to the image $J = \varphi^K(F_N) < F_N$  and is thus itself free.  Moreover, the isomorphism conjugates $\bar \varphi$ to the restriction of $\varphi$ to $J$, and thus we may view $\bar \varphi \colon Q \to Q$ and $\varphi|_J \colon J \to J$ as the ``same'' injective endomorphism.

We refer to the train track map $f \colon \Theta \to \Theta$ as a {\em weak train track representative} of this quotient endomorphism $\bar \varphi \colon Q \to Q$ of $f_*$.   The goal of this section is to prove Theorem~\ref{T:promoting train track maps} which promotes the weak train track representative $f \colon \Theta \to \Theta$ to an honest train track representative $\bar f \colon \bar \Theta \to \bar \Theta$ of $\bar\varphi$ (meaning that $\bar f_* = \bar \varphi$ up to conjugation) whenever $f$ is an expanding irreducible train track map.

\subsection{Subgroups and lifts}
\label{sec:subgroups_and_lifts}

For the remainder of \S\ref{S:induced_tt} we fix an expanding irreducible train track map $f\colon\Theta\to\Theta$. We begin with a simple observation.

\begin{lemma}
\label{lem:legal_loops}
For every edge $e$ of $\Theta$, there exists a legal loop $\alpha_e\colon S^1\to \Theta$ crossing $e$. Here ``legal'' simply means that $f^k\circ \alpha\colon S^1\to \Theta$ is an immersion for all $k\ge 0$. In particular, $\Theta$ is a union of legal loops.
\end{lemma}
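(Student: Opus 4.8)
The plan is to build the desired legal loop through a given edge $e$ by concatenating legal edge-paths and then closing up, using the expanding and irreducible hypotheses to guarantee that sufficiently long iterates $f^k(e)$ supply legal paths between arbitrary edges. First I would recall that since $f$ is a train-track map, $f^n\vert_{e'}$ is an immersion for every edge $e'$ and every $n$; hence the edge-path $f^n(e')$ is \emph{legal} in the sense that all its iterated images $f^k(f^n(e'))=f^{n+k}(e')$ are immersed. Because $f$ is expanding, the combinatorial lengths $\abs{f^n(e')}\to\infty$, so for large $n$ the path $f^n(e')$ is long; because $f$ is irreducible, for any ordered pair of edges $(e',e'')$ some power $A(f)^t$ has positive $(e',e'')$-entry, which means the legal path $f^t(e')$ crosses $e''$ (in either orientation).

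The key construction is then as follows. Fix the edge $e$ with endpoints $x$ and $y$ (possibly $x=y$). By irreducibility applied to the pair $(e,e)$, choose $t\ge 1$ so that $f^t(e)$ crosses $e^{\pm1}$; thus $f^t(e)$ contains $e$ or $\bar e$ as a subpath, say $f^t(e) = \beta_1 \, e^{\varepsilon}\, \beta_2$ for some $\varepsilon\in\{\pm1\}$ and edge-paths $\beta_i$. Now I would splice: the concatenation $\gamma = e^{\varepsilon}\,\beta_2\,\beta_1$ is a closed edge-path based at an endpoint of $e$ (one checks the endpoints match because $f^t(e)$ is itself a path from $f^t(x)$ to $f^t(y)$ and $e^\varepsilon$ runs between the same two vertices, up to swapping $x,y$; if the parities don't immediately line up one instead runs $e$ once more or uses $f^{2t}(e)$, which still crosses $e$ and is still legal). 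The loop $\gamma$ crosses $e$ by construction. It remains to arrange legality of $\gamma$: a closed concatenation of legal paths is legal provided the turn taken at each splice point is legal, i.e.\ the two incident partial-edge directions have distinct $f$-images (and this persists under iteration).

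The main obstacle — and the step I'd spend the most care on — is precisely controlling the turns at the concatenation points so that $f^k\circ\gamma$ is an immersion for \emph{all} $k\ge 0$, not just $k=0$. Rather than fight the turn conditions directly, the cleaner route is: take $n$ large and consider $f^n(e)$, which by the above crosses every edge and in particular crosses $e^{\pm1}$ at least twice for $n$ large enough (apply irreducibility and expansion together: once the entries of $A(f)^t$ are all positive, the entries of $A(f)^{t+m}$ grow, so $f^{t+m}(e)$ crosses $e$ with multiplicity $\to\infty$). Then $f^n(e)$ has the form $\delta_0\, e^{\varepsilon_1}\,\delta_1\, e^{\varepsilon_2}\,\delta_2$, and I claim one of the two ``return'' subpaths $e^{\varepsilon_1}\delta_1 e^{\varepsilon_2}$ (or its reverse) can be closed into a legal loop: the turns occurring inside $f^n(e)$ are automatically legal for all iterates since $f^n(e)$ and all its $f$-images $f^{n+k}(e)$ are immersed, and closing up only requires matching the direction entering $e^{\varepsilon_1}$ with the direction leaving $e^{\varepsilon_2}$ — which are both directions of the edge $e$, so the turn at the splice is degenerate in the edge $e$ and is legal exactly when $\varepsilon_1 = \varepsilon_2$; since $e^{\pm1}$ appears at least twice with only two possible signs, after taking $n$ a bit larger (forcing three or more crossings) the pigeonhole principle yields two crossings with the same sign and hence a legal loop $\alpha_e$ through $e$. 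Finally, the ``in particular'' clause is immediate: every edge $e$ lies in the image of the legal loop $\alpha_e\colon S^1\to\Theta$, so $\Theta=\bigcup_e \alpha_e(S^1)$ is a union of legal loops.
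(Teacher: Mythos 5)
Your splicing device---close up a subpath of $f^n(e)$ at a repeated same-direction crossing of an edge, so that every turn of the resulting loop, including the one at the splice, is a turn taken in $f^n(e)$ and hence survives all iterates---is correct, and it is essentially the mechanism of the paper's proof. The genuine gap is in the step that produces the repeated crossing of $e$ \emph{itself}. You justify it by asserting that some power $A(f)^t$ has all entries positive, so that $f^n(e)$ crosses every edge with multiplicity tending to infinity. Irreducibility does not imply this: an irreducible matrix need not be primitive, and the paper nowhere assumes primitivity (indeed Proposition~\ref{P:lift_is_irred} treats ``irreducible'' and ``has a power with positive transition matrix'' as separate hypotheses). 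For a concrete failure of your intermediate claims, take the rose on two petals $a,b$ with $f(a)=b^2$ and $f(b)=a^2$: this is an expanding irreducible train track map, no power of its transition matrix is positive, and $f^n(a)$ never crosses both edges. So the assertion ``for $n$ large, $f^n(e)$ crosses $e^{\pm1}$ at least three times'' is exactly the point your argument leaves unproved.

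The statement you need is in fact true, but it requires a different argument---or can be sidestepped, as the paper does. The paper uses expansion alone: once the combinatorial length of $f^j(e)$ exceeds twice the number of edges, pigeonhole gives \emph{some} edge $e'$ crossed twice in the same direction by $f^j(e)$; splicing at an interior point of $e'$ yields a loop $\beta$ that is legal for all iterates (your argument, with $e'$ in place of $e$), and only then is irreducibility invoked, once, to choose $\ell$ with $f^\ell(e')$ crossing $e$, so that $f^\ell\circ\beta$ is a legal loop through $e$. Alternatively, your step can be repaired directly: writing $a_m$ for the $(e,e)$--entry of $A(f)^m$, one has $a_{m_1+m_2}\ge a_{m_1}a_{m_2}$; if $a_m\le 1$ for all $m$, then for every edge $e''$ irreducibility provides $s$ with the $(e'',e)$--entry of $A(f)^s$ positive, whence the $(e,e'')$--entry of $A(f)^m$ is at most $a_{m+s}\le 1$ for all $m$, so the lengths of the paths $f^m(e)$ are bounded by the number of edges, contradicting expansion. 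Hence some $a_m\ge 2$, supermultiplicativity gives an iterate with $a_{km}\ge 3$, and then your pigeonhole and splice go through.
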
 
\begin{proof}
Since $f$ is expanding and $\Theta$ has finitely many edges, there exists an integer $j$ so that $f^j(e)$ crosses some edge $e'$ at least twice in the same direction. Irreducibility then provides some $\ell\ge j$ so that $f^\ell(e)$ crosses $e$ twice in the same direction. Thus we may find a subinterval $I\subset e$, say whose endpoints both map to an interior point of $e$, such that the restriction $f^\ell\vert_I$ defines an immersed closed loop $\alpha \colon S^1\to \Theta$ crossing $e$. Since $f$ is a train-track map, it follows that $\alpha$ is legal.
\end{proof}

Let $v$ be an $f$--periodic vertex of $\Theta$, say of period $r$. Then set $v_0 = v$ and $v_i = f^i(v_0)$ for $i = 1,\ldots, r-1$.  We consider the indices of the vertices $v_0,\ldots,v_{r-1}$ modulo $r$ in what follows.

Now we let $B_i = \pi_1(\Theta,v_i)$. Then $f$ induces homomorphisms $B_i \to B_{i+1}$, with $i = 0,\ldots,r-1$ and indices modulo $r$.  We write $f_*$ to denote any of these homomorphisms (though to clarify, we may also write $(f_*)_i \colon B_i \to B_{i+1}$).  With this convention, we can write $f_*^j$, for $j \in \Z$ with $j \geq 0$, to denote any of the $r$ homomorphisms $(f_*^j)_i\colon B_i \to B_{i+j}$ with subscripts taken modulo $r$.

A path $\delta$ from $v_j$ to $v_i$ determines an isomorphism $\rho_\delta \colon B_i \to B_j$.  The image $f_u^\ell(\delta) = \delta'$ likewise determines an isomorphism $\rho_{\delta'} \colon B_{i+\ell} \to B_{j+\ell}$, and we have
\begin{equation} \label{E:nearly_conjugate} 
(f_*^\ell)_j \circ \rho_{\delta} = \rho_{\delta'} \circ (f_*^\ell)_i.
\end{equation}
Note that changing $\delta$ (and hence also $\delta'$), we obtain potentially different isomorphisms $\rho_\delta$ and $\rho_{\delta'}$.  

Fix $i$ and let $n > 0$ be an integer such that the restriction of $f_*$ to the subgroup $J_i = f_*^{nr}(B_i) <  B_i$ is injective.  Let $\delta$ be a path from $v_{i+1}$ to $v_i$ and $\delta' = f^{nr}(\delta)$.  Then setting
\[ J_{i+1}= f_*^{nr}(B_{i+1}) \]
we have
\[  J_{i+1} = f_*^{nr}(\rho_\delta(B_i)) = \rho_{\delta'}(f_*^{nr}(B_i)) = \rho_{\delta'}(J_i),\]
and hence $\rho_{\delta'}$ restricts to an isomorphism from $J_i$ to $J_{i+1}$.  It is interesting to note that $J_{i+1}$ is defined without reference to $\delta$ (or $\delta'$).
Furthermore, if $\delta'' = f(\delta')$, then by \eqref{E:nearly_conjugate} we have
\[ (f_*)_{i+1} = \rho_{\delta''} \circ (f_*)_i  \circ \rho_{\delta'}^{-1} \colon B_{i+1} \to B_{i+2}, \]
and hence the restriction of $f_*$ to $J_{i+1}$ is injective.
Therefore, if we let $n(i) > 0$ be the smallest positive integer so that $f_*$ restricted to $J_i = f_*^{n(i)r}(B_i)$ is injective, then we have shown that $n(i) \geq n(i+1)$.  Since this condition is true for all $i$, it follows that $n(i) = n(j)$ for all $0 \leq i,j \leq r-1$. We henceforth fix  $n = n(i)$.  

For each $i$ let $p_i \colon \widetilde \Theta_i \to \Theta$ denote the cover corresponding to the conjugacy class $J_i < \pi_1(\Theta,v_i)$.   Let $\widetilde V_i \subset p_i^{-1}(v_i)$ denote the set of all vertices $\widetilde v_i$ so that $(p_i)_*(\pi_1(\widetilde \Theta_i,\widetilde v_i)) = J_i$.  Then the covering group of $p_i \colon \widetilde \Theta_i \to \Theta$ acts simply transitively on $\widetilde V_i$.   Since the isomorphism $\rho_{\delta'}$ sends $J_i$ to $J_{i+1}$, it follows that there is an isomorphism of covering spaces $\widetilde \Theta_i \to \widetilde \Theta_{i+1}$.  Repeating this $r$ times, we see that all the covering spaces $\{ p_i \colon \widetilde \Theta_i \to \Theta \}_{i=0}^{r-1}$ are pairwise isomorphic.  In particular, we now simply write $p \colon \widetilde \Theta \to \Theta$ for any one of these spaces. Write $\bar \Theta$ for the convex (Stallings) core of $\widetilde \Theta$, and we note that this is a proper subgraph.

For all $m\ge n$ we have $f_*^{mr}(B_i) = f_*^{nr}(f_*^{(m-n)r}(B_i)) \le f_*^{nr}(B_i) = J_i$. Thus from standard covering space theory, we know that for every $i$ and every $\widetilde v_i \in \widetilde V_i$ there is a unique continuous map $\widehat{f_{\widetilde v_i}^{mr}}$ making the following diagram commute:
\[ \xymatrix{  & (\widetilde \Theta, \widetilde v_i) \ar[d]^p \\
(\Theta,v_i) \ar[r]^{f^{mr}} \ar[ur]^{\widehat{f_{\widetilde v_i}^{mr}}} & (\Theta,v_i) }\]

\begin{proposition} \label{P:surjective lifted power}
For any $m \geq n$ and $\widetilde v_i\in \widetilde V_i$, we have $\widehat{f_{\widetilde v_i}^{mr}}(\Theta) =\bar \Theta$. 
\end{proposition}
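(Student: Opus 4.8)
The plan is to establish the two inclusions $\widehat{f_{\widetilde v_i}^{mr}}(\Theta)\subseteq\bar\Theta$ and $\bar\Theta\subseteq\widehat{f_{\widetilde v_i}^{mr}}(\Theta)$ separately, fixing $\widetilde v_i\in\widetilde V_i$ once and for all; the argument below works for any such choice (in any case the covering group acts transitively on $\widetilde V_i$ and preserves the core $\bar\Theta$, so the statement is independent of the choice). For the inclusion into $\bar\Theta$ I would invoke Lemma~\ref{lem:legal_loops} to write $\Theta$ as a union $\bigcup_e\alpha_e(S^1)$ of legal loops. Since $\widehat{f_{\widetilde v_i}^{mr}}$ is a graph map, $\widehat{f_{\widetilde v_i}^{mr}}(\Theta)$ is the union of the loops $\widehat{f_{\widetilde v_i}^{mr}}\circ\alpha_e\colon S^1\to\widetilde\Theta$; each of these is a lift of the immersed loop $f^{mr}\circ\alpha_e$, hence, as $p$ is a local homeomorphism (so any backtrack would project to a backtrack downstairs), is itself an immersed loop in $\widetilde\Theta$. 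The image of an immersed loop is a connected subgraph with no valence-$1$ vertices, so it lies in the core $\bar\Theta$; this gives $\widehat{f_{\widetilde v_i}^{mr}}(\Theta)\subseteq\bar\Theta$.

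For the reverse inclusion I would treat the case $m=n$ first and directly. Because $v_i$ is $f$-periodic with period dividing $nr$, the map $\widehat{f_{\widetilde v_i}^{nr}}$ is a pointed lift with $p_*\circ(\widehat{f_{\widetilde v_i}^{nr}})_* = (f_*^{nr})_i$, whence $(\widehat{f_{\widetilde v_i}^{nr}})_*(B_i)=p_*^{-1}(f_*^{nr}(B_i))=p_*^{-1}(J_i)=\pi_1(\widetilde\Theta,\widetilde v_i)$; that is, $\widehat{f_{\widetilde v_i}^{nr}}$ is $\pi_1$-surjective. Factoring it through the ($\pi_1$-injective) inclusion of the subgraph $\widehat{f_{\widetilde v_i}^{nr}}(\Theta)$ shows that this subgraph carries all of $\pi_1(\widetilde\Theta,\widetilde v_i)$, and a connected subgraph of $\widetilde\Theta$ that contains $\widetilde v_i$ and carries the full fundamental group must contain every reduced loop based at $\widetilde v_i$, hence contains $\bar\Theta$. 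Combined with the first paragraph, $\widehat{f_{\widetilde v_i}^{nr}}(\Theta)=\bar\Theta$.

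To reach $m>n$ I would pass to the lift $h\colon\widetilde\Theta\to\widetilde\Theta$ of $f^r$ with $h(\widetilde v_i)=\widetilde v_i$, which exists since $f_*^r(J_i)=f_*^{(n+1)r}(B_i)\subseteq J_i$. As $h^{m-n}$ lifts $f^{(m-n)r}$ and fixes $\widetilde v_i$, uniqueness of lifts gives $\widehat{f_{\widetilde v_i}^{mr}}=h^{m-n}\circ\widehat{f_{\widetilde v_i}^{nr}}$, so $\widehat{f_{\widetilde v_i}^{mr}}(\Theta)=h^{m-n}(\bar\Theta)$ and everything reduces to the claim $h(\bar\Theta)=\bar\Theta$. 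Here $h(\bar\Theta)=\widehat{f_{\widetilde v_i}^{(n+1)r}}(\Theta)\subseteq\bar\Theta$ by the first paragraph; for the reverse inclusion I would argue by rank. The restriction $h|_{\bar\Theta}$ is $\pi_1$-injective, because on $\pi_1$ it corresponds, via $p_*$, to $f_*^r|_{J_i}\colon J_i\to J_i$, a composite of the maps $f_*|_{J_j}$, each injective by the choice of $n$. Factoring $h|_{\bar\Theta}$ as $\bar\Theta\to h(\bar\Theta)\hookrightarrow\bar\Theta$ forces $\rk\,\pi_1(h(\bar\Theta))\ge\rk\,\pi_1(\bar\Theta)$, while $h(\bar\Theta)$ being a connected subgraph of $\bar\Theta$ gives the opposite inequality, so the ranks agree; and a connected subgraph of a core graph carrying a subgroup of the same rank is the whole core, since the complementary part would be a union of trees hanging off it, impossible as $\bar\Theta$ has no valence-$1$ vertices. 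Iterating $h(\bar\Theta)=\bar\Theta$ then gives $\widehat{f_{\widetilde v_i}^{mr}}(\Theta)=\bar\Theta$ for all $m\ge n$.

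The step I expect to be the real content is the claim $h(\bar\Theta)=\bar\Theta$ underlying the case $m>n$: the subtlety is precisely that for $m>n$ the map $\widehat{f_{\widetilde v_i}^{mr}}$ need not be $\pi_1$-surjective (the stable quotient $Q\cong J_i$ is in general a proper image of $f_*^{mr}(B_i)$), so one cannot see directly that its image carries all of $\pi_1(\widetilde\Theta)$; routing through the $\pi_1$-injective lift $h$ of $f^r$ and then comparing ranks of fundamental groups of subgraphs is what sidesteps this. The remaining care is bookkeeping with basepoints and the periodic vertex $v_i$, together with the standard facts about Stallings cores used freely above (the core is the maximal subgraph with no valence-$1$ vertices, equivalently the union of reduced based loops, and is contained in any connected subgraph carrying the full fundamental group).
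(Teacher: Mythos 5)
Your first inclusion (image lies in the core, via legal loops and the fact that lifts of immersed loops are immersed) and your treatment of the case $m=n$ (the lift $\widehat{f_{\widetilde v_i}^{nr}}$ is $\pi_1$--surjective, so its image, a connected subgraph containing $\widetilde v_i$ and carrying all of $\pi_1(\widetilde\Theta)$, contains $\bar\Theta$) match the paper's argument. The problem is the step you yourself flag as the real content, namely $h(\bar\Theta)=\bar\Theta$. You deduce $\rk\,\pi_1(h(\bar\Theta))\ge\rk\,\pi_1(\bar\Theta)$ from the factorization $\bar\Theta\to h(\bar\Theta)\hookrightarrow\bar\Theta$ of the $\pi_1$--injective map $h|_{\bar\Theta}$; but an injection $\pi_1(\bar\Theta)\hookrightarrow\pi_1(h(\bar\Theta))$ gives no lower bound on the rank of the target, since a free group of rank $2$ already contains free subgroups of every finite rank. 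Indeed, no purely algebraic argument of this shape can work: an injective endomorphism of a free group can have image contained in a proper free factor (e.g.\ map $F_3$ isomorphically onto an index-two subgroup of a rank-$2$ free factor of $F_3$), and in that situation the corresponding image graph would be a proper subgraph of the core. So the rank comparison does not close the case $m>n$, and some dynamical input about $f$ beyond injectivity of $f_*$ on $J_i$ is genuinely needed.

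The paper supplies that input in a much simpler way, and with the opposite order of factorization: by uniqueness of lifts, $\widehat{f_{\widetilde v_i}^{mr}}=\widehat{f_{\widetilde v_i}^{nr}}\circ f^{(m-n)r}$ (composing with $f^{(m-n)r}$ \emph{downstairs} first), and since $f$ is surjective --- this is part of Definition~\ref{def:train-track}, and also follows from irreducibility, since $f^t(\Theta)\subseteq f(\Theta)$ for all $t\ge 1$ --- one gets $\widehat{f_{\widetilde v_i}^{mr}}(\Theta)=\widehat{f_{\widetilde v_i}^{nr}}\bigl(f^{(m-n)r}(\Theta)\bigr)=\widehat{f_{\widetilde v_i}^{nr}}(\Theta)\supseteq\bar\Theta$, reducing everything to the $m=n$ case you already handled. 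Your choice to factor as $h^{m-n}\circ\widehat{f_{\widetilde v_i}^{nr}}$, with the lift $h$ of $f^r$ applied upstairs, is what forces you to prove surjectivity of $h$ on the core, i.e.\ essentially the statement of Proposition~\ref{prop:lifted_tt_map}, whose proof in the paper itself relies on the present proposition. To repair your write-up, replace the rank argument by the observation that $f^{(m-n)r}(\Theta)=\Theta$ and switch the order of composition; the remaining bookkeeping (the incidental fact that $h(\bar\Theta)\subseteq\bar\Theta$, and the independence of the choice of $\widetilde v_i$) is fine as you have it.
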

\begin{proof}
Fix $m\ge n$ and $\widetilde v_i\in\widetilde V_i$. Since $\widehat{f_{\widetilde v_i}^{nr}}$ is surjective on the level of fundamental groups, the containment $\bar \Theta \subseteq \widehat{f_{\widetilde v_i}^{nr}}(\Theta)$ is immediate. Since $f\colon \Theta\to\Theta$ is itself surjective, it follows that we also have the inclusion
\[\bar\Theta \subseteq \widehat{f_{\widetilde v_i}^{nr}}(\Theta) = \widehat{f_{\widetilde v_i}^{nr}}\circ f^{(m-n)r}(\Theta) = \widehat{f_{\widetilde v_i}^{mr}}(\Theta).\]
Here we have used the equality $\widehat{f_{\widetilde v_i}^{nr}} \circ f^{(m-n)r} = \widehat{f_{\widetilde v_i}^{mr}}$ guaranteed by the uniqueness of lifts of $f^{mr}$ sending $v_i$ to $\widetilde v_i$.

On the other hand, for any legal loop $\alpha\colon S^1\to \Theta$ the composition $\widehat{f_{\widetilde v_i}^{mr}} \circ \alpha$ is an immersion; this conclusion follows from the local injectivity of $p\circ \widehat{f^{mr}_{\widetilde v_i}}\circ \alpha = f^{mr} \circ \alpha$. 
Since the closure of $\widetilde \Theta \setminus \bar\Theta$ consists of finitely may pairwise disjoint trees,  it follows that the image of $\widehat{f_{\widetilde v_i}^{mr}}\circ \alpha$ must be contained in the core $\bar\Theta$. The containment $\widehat{f_{\widetilde v_i}^{mr}}(\Theta)\subset \bar\Theta$ now follows from the fact that $\Theta$ is a union of legal loops (Lemma~\ref{lem:legal_loops}).
\end{proof}

Since $(f_*)_i$ restricted to $J_i$ is an injective homomorphism into $J_{i+1}$, for any choice of basepoints $\widetilde v_i \in \widetilde V_i$ and $\widetilde v_{i+1} \in \widetilde V_{i+1}$ covering space theory again provides a unique map $\widetilde f_{\widetilde v_i, \widetilde v_{i+1}} \colon \widetilde \Theta \to \widetilde \Theta$ making the following diagram commute:
\[ \xymatrix{ (\widetilde \Theta,\widetilde v_i) \ar[r]^{\widetilde f_{\widetilde v_i, \widetilde v_{i+1}} \, \, } \ar[d]_p & (\widetilde \Theta,\widetilde v_{i+1}) \ar[d]^p\\
(\Theta,v_i) \ar[r]^{f \, \, } & (\Theta,v_{i+1})} \]

\begin{proposition}\label{prop:lifted_tt_map}
Let $\bar f = \widetilde f_{\widetilde v_i, \widetilde v_{i+1}}\vert_{\bar \Theta}$ be the restriction of any such lift $\widetilde f_{\widetilde v_i, \widetilde v_{i+1}}$ to $\bar \Theta$. Then $\bar f(\bar \Theta) = \bar\Theta$ and $\bar f \colon \bar \Theta\to\bar\Theta$ is an expanding train track map.  
\end{proposition}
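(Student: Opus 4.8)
The plan is to derive all four assertions from Proposition~\ref{P:surjective lifted power}, the uniqueness of lifts through the covering $p$, and the surjectivity of $f$; the crux is the equality $\bar f(\bar\Theta)=\bar\Theta$, which I would prove first since it also makes sense of $\bar f$ as a self-map of $\bar\Theta$. Let $\widetilde f:=\widetilde f_{\widetilde v_i,\widetilde v_{i+1}}\colon\widetilde\Theta\to\widetilde\Theta$ be the lift of $f$ whose restriction to $\bar\Theta$ is $\bar f$. By Proposition~\ref{P:surjective lifted power} we have $\widehat{f^{nr}_{\widetilde v_i}}(\Theta)=\bar\Theta$, so $\widetilde f(\bar\Theta)=\bigl(\widetilde f\circ\widehat{f^{nr}_{\widetilde v_i}}\bigr)(\Theta)$. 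Now $\widetilde f\circ\widehat{f^{nr}_{\widetilde v_i}}$ and $\widehat{f^{nr}_{\widetilde v_{i+1}}}\circ f$ are both lifts through $p$ of the graph map $f^{nr+1}\colon\Theta\to\Theta$ taking $v_i$ to $\widetilde v_{i+1}$ --- the lift $\widehat{f^{nr}_{\widetilde v_{i+1}}}$ being available precisely because $\widetilde v_{i+1}\in\widetilde V_{i+1}$ --- so by uniqueness of lifts they coincide. Hence, using that $f$ is surjective and then applying Proposition~\ref{P:surjective lifted power} at the index $i+1$, $\widetilde f(\bar\Theta)=\bigl(\widehat{f^{nr}_{\widetilde v_{i+1}}}\circ f\bigr)(\Theta)=\widehat{f^{nr}_{\widetilde v_{i+1}}}(\Theta)=\bar\Theta$. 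In particular $\widetilde f$ preserves $\bar\Theta$, so $\bar f=\widetilde f|_{\bar\Theta}\colon\bar\Theta\to\bar\Theta$ is a well-defined surjection.

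The remaining properties are then essentially formal. The graph $\bar\Theta$ is the Stallings core of the finitely generated subgroup $J_i$, hence a finite graph with no valence-one vertices. For every edge $\bar e$ of $\bar\Theta$ and every $k\ge 1$, the composite $p\circ(\bar f^{k}|_{\bar e})=(f^{k}|_{p(\bar e)})\circ(p|_{\bar e})$ is an immersion, because $f$ is a train-track map and $p$ restricts to a homeomorphism from $\bar e$ onto an edge of $\Theta$; since the covering $p\colon\widetilde\Theta\to\Theta$ is a local homeomorphism, it follows that $\bar f^{k}|_{\bar e}$ is itself locally injective. Together with the surjectivity established above, this shows $\bar f$ is a train-track map. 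Finally, the reduced edge-path $\bar f^{n}(\bar e)$ and its image $p(\bar f^{n}(\bar e))=f^{n}(p(\bar e))$ have the same combinatorial length (as $p$ carries each edge of $\widetilde\Theta$ homeomorphically onto an edge of $\Theta$), and this length tends to $\infty$ with $n$ because $f$ is expanding; hence $\bar f$ is expanding.

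The step I expect to require the most care is the bookkeeping attached to the identifications $\widetilde\Theta_i\cong\widetilde\Theta_{i+1}$ (and $\widetilde V_i\cong\widetilde V_{i+1}$) implicit in viewing $\widetilde f$ as a self-map of a single graph $\widetilde\Theta$ with core $\bar\Theta$: the computation above really shows that $\widetilde f_{\widetilde v_i,\widetilde v_{i+1}}$ carries the core of $\widetilde\Theta_i$ onto that of $\widetilde\Theta_{i+1}$, and one must confirm that precomposing with the chosen covering-space isomorphisms is consistent with the various lifts $\widehat{f^{nr}_{\widetilde v_i}}$, $\widehat{f^{nr}_{\widetilde v_{i+1}}}$ appearing in the argument. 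Granting this, the proof introduces no geometric content beyond Proposition~\ref{P:surjective lifted power} and otherwise runs on elementary covering-space and core-graph considerations.
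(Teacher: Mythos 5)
Your proof is correct, but it reaches the key equality by a genuinely different route than the paper. The paper proves the two inclusions separately: the containment $\bar f(\bar\Theta)\subseteq\bar\Theta$ is obtained by writing $\bar\Theta$ as a union of images of loops $\beta_j=\widehat{f^{nr}_{\widetilde v_i}}\circ\alpha_j$ coming from Lemma~\ref{lem:legal_loops} and observing that $\bar f\circ\beta_j$ is an immersed loop, hence cannot leave the core; the reverse inclusion is then obtained by composing preferred lifts $\bar f_{i+1},\dotsc,\bar f_i$ around the whole periodic cycle with $\widehat{f^{nr}_{\widetilde v_{i+1}}}$ to get a lift of $f^{(n+1)r}$ based at $\widetilde v_{i+1}$, to which Proposition~\ref{P:surjective lifted power} applies. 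You instead get $\widetilde f(\bar\Theta)=\bar\Theta$ in one stroke from the identity $\widetilde f\circ\widehat{f^{nr}_{\widetilde v_i}}=\widehat{f^{nr}_{\widetilde v_{i+1}}}\circ f$: this is legitimate, since both composites cover $f^{nr+1}$ and send $v_i$ to $\widetilde v_{i+1}$, the lift $\widehat{f^{nr}_{\widetilde v_{i+1}}}$ exists because $f_*^{nr}(B_{i+1})=J_{i+1}$, and Proposition~\ref{P:surjective lifted power} holds at the index $i+1$ as well; combined with surjectivity of $f$ this gives invariance and surjectivity of $\bar f$ simultaneously, for an arbitrary choice of $\widetilde v_i\in\widetilde V_i$, $\widetilde v_{i+1}\in\widetilde V_{i+1}$, without legal loops or preferred lifts at the other indices. (Your identity is of the same flavor as the uniqueness-of-lifts computations the paper uses inside Proposition~\ref{P:surjective lifted power} and Lemma~\ref{L:semi-conjugating lift}, just deployed earlier.) Your edge-by-edge verification of the train-track and expanding properties, via $p\circ\bar f^k\vert_{\bar e}=f^k\vert_{p(\bar e)}\circ p\vert_{\bar e}$ and equality of combinatorial lengths under the covering, is sound and matches the paper's argument except that the paper routes local injectivity through the loops $\beta_j$ rather than individual edges. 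Finally, the bookkeeping concern you flag at the end is already discharged by the paper's setup: all the covers $\widetilde\Theta_i$ are identified with a single covering $p\colon\widetilde\Theta\to\Theta$ carrying distinguished vertex sets $\widetilde V_i$, so $\widehat{f^{nr}_{\widetilde v_i}}$, $\widehat{f^{nr}_{\widetilde v_{i+1}}}$, and $\widetilde f_{\widetilde v_i,\widetilde v_{i+1}}$ all live in that one space, the core $\bar\Theta$ is the same subgraph throughout, and your computation goes through verbatim.
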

\begin{proof}
Proposition~\ref{P:surjective lifted power} and Lemma~\ref{lem:legal_loops} show that there exist finitely many legal loops $\alpha_1,\dotsc,\alpha_k\colon S^1\to \Theta$ such that $\bar\Theta$ is the union of the images of $\beta_j = \widehat{f_{\widetilde v_i}^{nr}} \circ \alpha_j$ for $j = 1,\dotsc, k$. Noting that $\bar f \circ \beta_j$ is an immersion (because it is a lift of the immersion  $f\circ f^{nr} \circ \alpha_j$), its image must be contained in $\bar \Theta$.  Therefore, $\bar f$ maps the union $\cup_j \beta_j(S^1) =\bar\Theta$ into $\bar\Theta$, and we conclude $\bar f(\bar \Theta) \subseteq \bar \Theta$.

Thus $\bar f$ is a graph map from $\bar\Theta$ to itself, $\bar f \colon \bar \Theta \to \bar \Theta$, and we may consider its iterates $\bar f^\ell$. As above, we now see that $\bar f^\ell \circ \beta_j$ lifts $f^\ell\circ f^{nr}\circ \alpha_j$ and so is an immersion for each $\ell > 0$. Since each edge of $\bar\Theta$ is crossed by some $\beta_j$, this proves each iterate $\bar f^\ell$ is locally injective on each edge $\tilde{e}$ of $\bar\Theta$. Moreover, since $p$ is a covering map, the combinatorial length of $\bar f^\ell(\tilde{e})$ is equal to that of $p\circ \bar f^\ell(\tilde{e}) = f^\ell(p(\tilde{e}))$. Therefore $\bar f$ is expanding because $f$ is.

To prove the proposition it remains to show that $\bar f(\bar\Theta)\supseteq\bar\Theta$. Fix preferred lifts $\widetilde v_i\in \widetilde V_i$ for each $0\le i < r$ and set $\bar f_i = \widetilde f_{\widetilde v_i, \widetilde v_{i+1}}\vert_{\bar \Theta}$ for $0 \le i < r$. It suffices to show that each $\bar f_i$ maps $\bar\Theta$ onto $\bar \Theta$. To see that $\bar f_i(\bar\Theta)=\bar \Theta$, note that
\[\bar f_i \circ \bar f_{i-1} \circ \dotsb \circ \bar f_{i+2}\circ \bar f_{i+1}\circ \widehat{f^{nr}_{\widetilde v_{i+1}}}\colon(\Theta,v_{i+1})\to (\widetilde \Theta, \widetilde v_{i+1})\]
(with subscripts taken modulo r) is a lift of $f^{(n+1)r}$ taking $v_{i+1}$ to $\widetilde v_{i+1}$. Therefore the above composition (and in particular $\bar f_i$) has image $\bar \Theta$ by Proposition~\ref{P:surjective lifted power}.
\end{proof}

For the remainder of this section, we let $\widetilde f = \widetilde f_{\widetilde v_i, \widetilde v_{i+1}}$ be any lift of $f$ as above, let $\bar f = \widetilde f \vert_{\bar\Theta} \colon \bar \Theta \to \bar \Theta$ be its restriction to the core $\bar\Theta$ of the covering $p \colon \widetilde \Theta \to \Theta$, and write $\bar p = p|_{\bar \Theta} \colon \bar \Theta \to \Theta$.

\begin{lemma} \label{L:semi-conjugating lift} There is a lift $\Phi \colon \Theta \to \widetilde \Theta$ of a power $f^K$ of $f$ with $\Phi(\Theta) = \bar \Theta$ such that $\Phi \circ p =  \widetilde f^K$ and consequently $\Phi \circ \bar p = \bar f^K$.  
\end{lemma}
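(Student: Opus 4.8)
The plan is to produce $P$ directly from the lifting machinery already established, by choosing the right power $f^K$ and the right basepoints. Recall that $n$ is the fixed integer from Section~\ref{sec:subgroups_and_lifts}, that $r$ is the period of the chosen $f$--periodic vertex $v$, and that $\widetilde v_i \in \widetilde V_i$ are the preferred lifts used to define $\widetilde f$ and $\bar f$; in particular $\widetilde f$ lifts $f\colon(\Theta,v_i)\to(\Theta,v_{i+1})$. Set $K = nr$. Since $f_*^{nr}(B_i) = J_i$, standard covering space theory provides the lift $\widehat{f^{nr}_{\widetilde v_i}}\colon(\Theta,v_i)\to(\widetilde\Theta,\widetilde v_i)$ of $f^{nr}$ appearing in Proposition~\ref{P:surjective lifted power}, and that proposition tells us its image is exactly $\bar\Theta$. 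So the natural candidate is $P = \widehat{f^{nr}_{\widetilde v_i}}$, with $K = nr$.

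First I would verify $P\circ p = \widetilde f^{K}$. Both sides are continuous maps $\widetilde\Theta\to\widetilde\Theta$. Composing with $p$ on the left and using $p\circ P = f^{nr}$ and $p\circ\widetilde f = f\circ p$ (iterated $r$ times around the cycle of basepoints, i.e.\ $p\circ\widetilde f^{nr} = f^{nr}\circ p$), we get $p\circ(P\circ p) = f^{nr}\circ p = f^{nr}\circ p = p\circ \widetilde f^{nr}\circ$ — more carefully, $p\circ P\circ p = (p\circ P)\circ p = f^{nr}\circ p$ and $p\circ\widetilde f^{nr} = f^{nr}\circ p$, so $P\circ p$ and $\widetilde f^{nr}$ are two lifts of the same map $\widetilde\Theta\to\Theta$ through the covering $p$. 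By the uniqueness of lifts it then suffices to check they agree at a single point of each connected component of $\widetilde\Theta$; since $\widetilde\Theta$ is connected (it is a connected cover of $\Theta$), it suffices to check agreement at one point. Tracking the basepoint: $\widetilde f^{nr}$ sends $\widetilde v_i$ to $\widetilde v_i$ (the basepoints cycle with period $r$, and $\widehat{f^{nr}}$ was set up compatibly), while $P\circ p$ sends $\widetilde v_i$ to $P(v_i) = \widehat{f^{nr}_{\widetilde v_i}}(v_i) = \widetilde v_i$ by construction. Hence $P\circ p = \widetilde f^{K}$ on all of $\widetilde\Theta$, and restricting to $\bar\Theta\subseteq\widetilde\Theta$ gives $P\circ\bar p = \widetilde f^{K}|_{\bar\Theta} = \bar f^{K}$, using that $\bar f = \widetilde f|_{\bar\Theta}$ and $\bar f(\bar\Theta)\subseteq\bar\Theta$ from Proposition~\ref{prop:lifted_tt_map}. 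Finally, $P$ is a lift of $f^{K} = f^{nr}$ with $P(\Theta) = \bar\Theta$ precisely by Proposition~\ref{P:surjective lifted power}.

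The only real subtlety — and the point I would be most careful about — is the bookkeeping of basepoints in the periodic case $r > 1$: the lift $\widetilde f$ was defined as $\widetilde f_{\widetilde v_i,\widetilde v_{i+1}}$ using a \emph{chosen} pair of preferred lifts, so $\widetilde f^{nr}$ is really a composition $\widetilde f_{\widetilde v_{i-1},\widetilde v_i}\circ\cdots\circ\widetilde f_{\widetilde v_i,\widetilde v_{i+1}}$ of the various preferred lifts around the cycle (exactly the kind of composition appearing in the proof of Proposition~\ref{prop:lifted_tt_map}), and one must make sure that this composition does send $\widetilde v_i$ to $\widetilde v_i$, i.e.\ that the preferred lifts were chosen consistently around the period. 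This is automatic once we fix, as in the proof of Proposition~\ref{prop:lifted_tt_map}, a single family $\{\widetilde v_i\}_{0\le i<r}$ of preferred lifts and use it throughout to define both $\widetilde f$ and $P = \widehat{f^{nr}_{\widetilde v_i}}$; with that consistency in place the basepoint computation above goes through and $K = nr$ works. If $r = 1$ everything collapses to the statement that $\widehat{f^n_{\widetilde v}}\circ p = \widetilde f^{\,n}$, which is immediate from uniqueness of lifts.
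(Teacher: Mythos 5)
Your overall framework (both $P\circ p$ and $\widetilde f^{K}$ are lifts of $f^{K}\circ p$ through $p$, so it suffices to make them agree at one point) is exactly the right mechanism, and it is the one the paper uses. But the basepoint computation contains a genuine gap: you assert that $\widetilde f^{\,nr}(\widetilde v_i)=\widetilde v_i$, and this is unjustified and in general false. The map $\widetilde f=\widetilde f_{\widetilde v_i,\widetilde v_{i+1}}$ is a \emph{single} map of $\widetilde\Theta$, determined by unique lifting once one prescribes $\widetilde f(\widetilde v_i)=\widetilde v_{i+1}$; its values at all other points, in particular at $\widetilde v_{i+1}$, are forced, not chosen. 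So fixing a ``consistent family'' of preferred lifts $\{\widetilde v_j\}$ does not make $\widetilde f(\widetilde v_{i+1})=\widetilde v_{i+2}$, and $\widetilde f^{\,nr}$ is an iterate of the one map $\widetilde f$, not the composition $\widetilde f_{\widetilde v_{i-1},\widetilde v_i}\circ\cdots\circ\widetilde f_{\widetilde v_i,\widetilde v_{i+1}}$ of the various basepoint-preserving lifts (that composition appears in the proof of Proposition~\ref{prop:lifted_tt_map}, but it is a different map from $\widetilde f^{\,r}$). Consequently $\widetilde f^{\,nr}(\widetilde v_i)$ is merely \emph{some} point of $p^{-1}(v_i)$, and your two lifts $P\circ p$ and $\widetilde f^{\,nr}$ may differ by a nontrivial deck transformation; $K=nr$ need not work. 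Your remark about the case $r=1$ only goes through if one additionally chooses $\widetilde v_{i+1}=\widetilde v_i$, which the lemma does not assume.

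This is precisely the difficulty the paper's proof is organized around: since the set $p^{-1}(v_i)\cap\bar\Theta$ is finite and $\bar f^{\,r}$ maps it into itself, the orbit $\widetilde v_i,\ \bar f^{\,r}(\widetilde v_i),\ \bar f^{\,2r}(\widetilde v_i),\dots$ is only \emph{eventually} periodic; one passes to a periodic point $z=\bar f^{\,kr}(\widetilde v_i)$ and works with $J=p_*(\pi_1(\widetilde\Theta,z))$, which is conjugate to, but possibly different from, $J_i$. One then checks $f_*^{2knr}(B_i)\le J$ (this needs the extra factor, pushing $f_*^{knr}(B_i)\le J_i$ forward by $\widetilde f_*^{\,knr}$), takes $P$ to be the lift of $f^{2knr}$ sending $v_i$ to $z$ (so $P=\bar f^{\,knr}\circ\widehat{f^{nr}_{\widetilde v_i}}$, which has image $\bar\Theta$ by Propositions~\ref{P:surjective lifted power}--\ref{prop:lifted_tt_map}), and sets $K=2knr$; now both $P\circ p$ and $\widetilde f^{\,K}$ send $\widetilde v_i$ to $z$, and your uniqueness-of-lifts argument closes the proof. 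To repair your write-up you would need to import this eventual-periodicity step and the enlarged power $K$; as written, the key identity $P\circ p=\widetilde f^{\,K}$ is not established.
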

Because $\Phi$ is a lift of $f^K$ and since $\bar f$ and $\bar p$ are restrictions, we also obviously have $p \circ \Phi = f^K$, $\bar p \circ \Phi = f^K$, and $\bar p \circ \bar f = f \circ \bar p$. 
\begin{proof}
The composition $\bar f^r$ necessarily maps the finite set $p^{-1}(v_i)\cap\bar\Theta$ into itself. Thus the sequence $\widetilde v_i, \bar f^r(\widetilde v_i), \bar f^{2r}(\widetilde v_i),\dotsc$ is eventually periodic. Choosing $k$ to be a sufficiently large multiple of the period, it follows that the point $z\colonequals \bar f^{kr}(\widetilde v_i)$ satisfies $\bar f^{mkr}(z) = z$ and $\bar f^{mkr}(\widetilde v_i) = z$ for all $m\ge1$. 

Set $J = p_*(\pi_1(\widetilde \Theta, z))$, and note that $J$ and $J_i=p_*(\pi_1(\widetilde \Theta,\widetilde v_i))$ are conjugate but possibly distinct subgroups of $B_i$. Observe that
\begin{align*}
f^{2knr}_*(\pi_1(\Theta,v_i)) &= f_*^{knr}\circ f_*^{knr}(\pi_1(\Theta,v_i)) \le f_*^{knr}(J_i) = f_*^{knr}\circ p_*(\pi_1(\widetilde\Theta,\widetilde v_i))\\ &= p_*\circ\widetilde f_*^{knr}(\pi_1(\widetilde\Theta,\widetilde v_i)) \le p_*(\pi_1(\widetilde\Theta,z)) = J.
\end{align*}
Therefore there is a unique lift $\Phi\colon(\Theta, v_i)\to (\widetilde\Theta,z)$ of $f^{2knr}$ sending $v_i$ to $z$. By inspection, this lift must be $\Phi = \widetilde f^{knr}\circ \widehat{f_{\widetilde v_i}^{knr}} = \bar f^{knr}\circ \widehat{f_{\widetilde v_i}^{nr}}$ and therefore has image $\bar\Theta$ by Propositions~\ref{P:surjective lifted power}--\ref{prop:lifted_tt_map}. 

Set $K = 2knr$, and we claim that $\widetilde{f}^K = \Phi \circ p$. Indeed, both maps lift the composition
\[f^K \circ p\colon (\widetilde \Theta,\widetilde v_i) \to (\Theta,v_i)\]
and send $\widetilde v_i\to z$ by construction; hence they are equal by uniqueness of lifts. Interestingly, this argument shows that a power of $\widetilde{f}$ (namely $\widetilde{f}^K$) maps all of $\widetilde{\Theta}$ into $\bar\Theta$.  
\end{proof}

\begin{proposition}  \label{P:lift_is_irred}
Let $f \colon \Theta \to \Theta$ and $\bar f \colon \bar \Theta \to \bar \Theta$ be as above.
If $f$ is irreducible then $\bar f$ is irreducible. If $f$ has a power with positive transition matrix, then $\bar f$ has a power with positive transition matrix.
\end{proposition}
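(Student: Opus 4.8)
The plan is to transfer ``crossing'' information between $\Theta$ and $\bar\Theta$ along the graph maps $\bar p\colon\bar\Theta\to\Theta$ and $P\colon\Theta\to\bar\Theta$ furnished by Lemma~\ref{L:semi-conjugating lift}, exploiting the identities $\bar p\,\bar f = f\,\bar p$ and $P\,\bar p = \bar f^{K}$ together with the surjectivity $P(\Theta)=\bar\Theta$. Recall that by Proposition~\ref{prop:lifted_tt_map} the map $\bar f$ is itself a train track map, so (as for $f$) iterating it produces no backtracking and the $(e',e)$-entry of $A(\bar f)^{t}$ equals the number of occurrences of $e^{\pm1}$ in the edge path $\bar f^{t}(e')$; thus ``irreducible'' and ``has a power with positive transition matrix'' are statements purely about which edges are crossed by iterated images of edges, and it suffices to argue with edge paths. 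Two bookkeeping observations will do all the work: (i) if an edge $\bar e$ occurs in an edge path $\gamma$ of $\bar\Theta$, then --- $\bar f$ being a graph map --- $\bar f^{K}(\gamma)$ is the literal concatenation of the $\bar f^{K}$-images of the edges of $\gamma$, so $\bar f^{K}(\gamma)$ crosses every edge crossed by $\bar f^{K}(\bar e)=P(\bar p(\bar e))$; and (ii) applying the covering projection to $\bar f^{s}(\bar e_1)$ gives exactly $f^{s}(\bar p(\bar e_1))$, so every $\Theta$-edge crossed by $f^{s}(\bar p(\bar e_1))$ is $\bar p(\bar e_0)$ for some edge $\bar e_0$ crossed by $\bar f^{s}(\bar e_1)$.

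The one point that needs a trick is that $\bar p$ is not injective on edges, so one cannot simply push $\bar e_1,\bar e_2$ down to $\Theta$, invoke irreducibility of $f$, and then lift the resulting crossing to the \emph{particular} edge $\bar e_2$; the fix is to pull $\bar e_2$ back through $P$ rather than through $\bar p$. In detail, for the irreducibility statement: given edges $\bar e_1,\bar e_2$ of $\bar\Theta$, use $P(\Theta)=\bar\Theta$ to pick an edge $e_0$ of $\Theta$ with $\bar e_2$ occurring in $P(e_0)$; by irreducibility of $f$ choose $s\ge 1$ with $e_0$ occurring in $f^{s}(\bar p(\bar e_1))$; by (ii) some edge $\bar e_0$ lying over $e_0$ then occurs in $\bar f^{s}(\bar e_1)$; and by (i), $\bar f^{K+s}(\bar e_1)=\bar f^{K}(\bar f^{s}(\bar e_1))$ crosses every edge crossed by $\bar f^{K}(\bar e_0)=P(\bar p(\bar e_0))=P(e_0)$, in particular it crosses $\bar e_2$. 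Hence the $(\bar e_1,\bar e_2)$-entry of $A(\bar f)^{K+s}$ is positive, and $\bar f$ is irreducible.

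For the second statement the identical chase works, except that the hypothesis now gives a single $m$ with $A(f)^{m}>0$, i.e.\ $f^{m}(\bar p(\bar e_1))$ crosses $e_0$ for \emph{every} $\bar e_1$ and every $e_0$; running the argument with $s=m$ shows every $(\bar e_1,\bar e_2)$-entry of $A(\bar f)^{K+m}$ is positive, so $A(\bar f)^{K+m}>0$. I expect the only place requiring care to be making observations (i)--(ii) precise --- i.e.\ nailing down that a graph map acts on edge paths by literal concatenation, and keeping the covering-space bookkeeping straight --- after which the argument is a short chase through $\Theta\leftrightarrows\bar\Theta$; the non-injectivity of $\bar p$ on edges is the only conceptual wrinkle, and it is resolved by routing the pullback of $\bar e_2$ through $P$.
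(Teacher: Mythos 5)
Your argument is correct and is essentially the paper's own proof: both hinge on choosing $e_0$ with $P(e_0)\supseteq\bar e_2$ (routing the target edge through $P$ rather than $\bar p$), applying irreducibility of $f$ to $\bar p(\bar e_1)$, and then using $\bar f^{K}=P\,\bar p$ together with $\bar p\,\bar f = f\,\bar p$ to conclude $\bar f^{K+s}(\bar e_1)\supseteq P(e_0)\supseteq\bar e_2$, with the same $s=m$ variant for the positive-power statement. The paper merely compresses your steps (i)--(ii) into the single computation $\bar f^{K+s}(\bar e_1)=P\circ\bar p\circ\bar f^{s}(\bar e_1)=P\circ f^{s}(\bar p(\bar e_1))$, avoiding the intermediate choice of a lift $\bar e_0$.
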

\begin{proof}
Assume first that $f$ is irreducible. Choose arbitrary edges $\tilde{e},\tilde{e}'$ of $\bar\Theta$ and set $e = \bar p(\tilde{e})$. With $\Phi$ as in Lemma~\ref{L:semi-conjugating lift}, we have $\Phi(\Theta) = \bar\Theta$, and so we may choose an edge $e_0$ of $\Theta$ such that $\Phi(e_0)\supseteq \tilde{e}'$. By irreducibility of $f$, there exist $s> 0$ such that $e_0\subseteq f^s(e)$.  Then applying Lemma~\ref{L:semi-conjugating lift} with $K$ as in the statement, we have
\begin{align*}
\bar f^{K+s}(\tilde e) &= \bar f^K\circ \bar f^s(\tilde e) 
= \Phi \circ \bar p \circ  \bar f^s(\tilde e)
= \Phi \circ f^s(e)
\supseteq \Phi(e_0)
\supseteq \tilde{e}'.
\end{align*}
Thus $\bar f$ is irreducible provided $f$ is. Next assume there is a power $f^\ell$ with positive transition matrix, so that in particular $f^\ell(e) = \Theta$ for every edge $e$ of $\Theta$. Choosing any edge $\tilde e$ of $\bar\Theta$, as above we find
\begin{align*}
\bar f^{K+\ell}(\tilde e) &= \bar f^K\circ \bar f^\ell(\tilde e) 
= \Phi\circ \bar p \circ  \bar f^\ell(\tilde e)
= \Phi\circ f^\ell(\bar p(\tilde e))
= \Phi(\Theta)
= \bar\Theta.
\end{align*}
Therefore $\bar f^{K+\ell}$ has positive transition matrix as well.
\end{proof}

\subsection{Train tracks for induced endomorphisms} 

Combining the results above, we can now easily give the
\begin{proof}[Proof of Theorem~\ref{T:promoting train track maps}.]
The map $\bar f \colon \bar \Theta \to \bar \Theta$ is given by Proposition \ref{prop:lifted_tt_map}, which together with Proposition \ref{P:lift_is_irred} implies $\bar f$ is an expanding irreducible train track map.  The map $\bar p \colon \bar \Theta \to \Theta$ is the restriction of a covering map to the core, and hence $\bar p_*$ defines an isomorphism of $\pi_1(\bar \Theta)$ onto the image $J = f^{nr}_*(\pi_1(\Theta)) < \pi_1(\Theta)$, up to conjugation.
By construction, $f_*|_J$ determines an injective endomorphism $J \to J$, up to conjugation.  Since $\bar p_* \bar f_* = f_* \bar p_*$, it follows that $\bar f_*$ induces an injective endomorphism of $\pi_1(\bar \Theta)$, up to conjugation.  As was shown in \cite[Proposition 2.6]{DKL2}, there is an isomorphism $J \to Q$ conjugating $f_*|_J$ to $\phi$.  It follows that with respect to this isomorphism and $\bar p_*$ we have $\phi = \bar f_*$, up to conjugation.

Let $\Phi \colon \Theta \to \bar \Theta$ and $K > 0$ be as in Lemma~\ref{L:semi-conjugating lift}.  The conclusion of that lemma proves the remainder of the theorem.
\end{proof}

The intrepid reader is encouraged to apply Theorem~\ref{T:promoting train track maps} to the naturally arising first return map $f_2\colon \Theta_2\to \Theta_2$ described in Example 5.7 and Figure 7 of \cite{DKL2}. For a warm-up, here is a simpler example:

\begin{example}
Let $\Theta$ be the $3$--petal rose depicted in Figure~\ref{F:example_train_track}, and let $f\colon \Theta\to\Theta$ be the expanding irreducible train track map defined on edges by $f(a) = ab$, $f(b) = bc$, $f(c) = abbc$. Then $\pi_1(\Theta)$ is free on generators ${\bf a, b, c}$ (correspoding to petals of the same letter), and we find that $f_*(\pi_1(\Theta)) = \langle{\bf ab}, {\bf bc}\rangle$. Thus $f_*$ is neither surjective nor injective, but we find that the restriction of $f_*$ to $J = \langle{\bf ab}, {\bf bc}\rangle$ is injective. 
The induced endomorphism $\phi\colon Q\to Q$ of the stable quotient $Q\cong J$ of $f_*$ is therefore given by $\phi({\bf ab}) = ({\bf ab})({\bf bc})$ and $\phi({\bf bc}) = ({\bf bc})({\bf ab})({\bf bc})$, which is an \emph{automorphism} of this rank $2$ free group.

\begin{figure} [htb]
\labellist
\small\hair 2pt
\pinlabel $a$ [bl] at 233 63
\pinlabel $b$ [t] at 197 1
\pinlabel $c$ [br] at 161 63
\pinlabel $a_0$ [b] at 73 59
\pinlabel $b_0$ [t] at 73 25
\pinlabel $b_1$ [t] at 26 25
\pinlabel $c_0$ [b] at 26 59
\pinlabel $f$ [tl] at 246 30
\pinlabel $\bar{f}$ [tl] at 55 13
\pinlabel $\bar{p}$ [b] at 128 52
\pinlabel $\Phi$ [t] at 129 31
\pinlabel $\Theta$ [b] at 197 65
\pinlabel $\bar\Theta$ [br] at 1 53
\endlabellist
\begin{center}
\includegraphics[height=2.9cm]{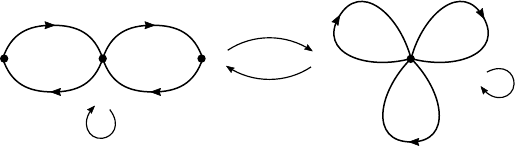} \caption{An application of Theorem~\ref{T:promoting train track maps}.}
\label{F:example_train_track}
\end{center}
\end{figure}

Plugging $f\colon \Theta\to \Theta$ into Theorem~\ref{T:promoting train track maps}, the construction produces the graph $\bar\Theta$ depicted in Figure~\ref{F:example_train_track} along with maps $\bar f$, $\bar p$, and $\Phi$ defined on edges by:
\begin{itemize}
\item $\bar{f}(a_0) = a_0b_0$, $\bar{f}(b_0) = \bar{f}(b_1) = b_1c_0$, and $\bar{f}(c_0) = a_0b_0b_1c_0$
\item $\bar{p}(a_0) = a$, $\bar{p}(b_0) = \bar{p}(b_1) = b$, and $\bar{p}(c_0) = c$
\item $\Phi(a) = a_0b_0$, $\Phi(b) = b_1c_0$, and $\Phi(c) = a_0b_0b_1c_0$.
\end{itemize}
One may easily verify that these satisfy the conclusion of Theorem~\ref{T:promoting train track maps} with $K = 1$.
\end{example}

\section{Semi-flows on $2$--complexes and free-by-cyclic groups} \label{S:semiflows}

To see how Theorem~\ref{T:promoting train track maps} can be applied to Theorem~\ref{T:same component, all iwip}, we briefly recall some of the setup and results from \cite{DKL,DKL2}.  Starting with an expanding, irreducible train-track map $f \colon \Gamma \to \Gamma$ representing an {\em automorphism} of the free group $\pi_1(\Gamma)$, in \cite{DKL} we constructed a $2$--complex $X = X_f$, the {\em folded mapping torus}, which is a (homotopy equivalent) quotient of the mapping torus of $f$ and contains an embedded copy of $\Gamma$.  The suspension flow on the mapping torus descends to a semi-flow $\psi$ on $X$ having $\Gamma$ as a cross section and $f$ as first return map, in the following sense.

\begin{defn}[see {\cite[\S5.1]{DKL2}}]
\label{def:cross_section}
A \emph{cross section} of $(X,\flow)$ is a finite embedded graph $\Theta\subset \Omega$ that is \emph{transverse to $\flow$} (meaning there is a neighborhood $W$ of $\Theta$ and a map $\eta\colon W\to S^1$ such that $\Theta = \eta\inv(x_0)$ for some $x_0\in S^1$ and for each $\xi\in X$ the map $\{s\in \R_{\ge 0} \mid \flow_s(\xi)\in W\}\to S^1$ given by $s\mapsto \eta(\flow_s(\xi))$ is an orientation preserving local diffeomorphism) with the property that every flowline hits $\Theta$ infinitely often (meaning $\{s\in \R_{\ge 0} \mid \flow_s(\xi)\in \Theta\}$ is unbounded for all $\xi\in X$).
\end{defn}

Being homotopy equivalent to the mapping torus, we have $G:= \pi_1(X) = \pi_1(\Gamma) \rtimes_{f_*} \Z$. The projection onto $\Z$ defines a primitive integral element $u_0 \in \Hom(G;\R)= H^1(G;\R) = H^1(X;\R)$.   The class $u_0$ projects into a component $\Sigma_0(G)$ of the BNS-invariant $\Sigma(G)$ of $G$, and we let $\mathcal S \subset H^1(G;\R)$ denote the open cone which is the preimage of $\Sigma_0(G)$. In \cite{DKL2} we proved that every primitive integral $u \in \mathcal S$ is ``dual'' to a cross section $\Theta \subset X$ of $\psi$ enjoying a variety of properties; 
see also \cite{Gau1,Gau3,Wang} for other results related to the existence of dual cross-sections for complexes equipped with semi-flows.
 To describe the duality, we recall that the first return map $f_\Theta \colon \Theta \to \Theta$ of $\flow$ to $\Theta$ allows us to write $G$ as the fundamental group of the mapping torus of $f_\Theta$.  This expression for $G$ determines an associated homomorphism to $\Z$ which is precisely $u$.  The class $u$ is determined by $\Theta$, and we thus write $[\Theta] = u$.  Alternatively, $\Theta$ is dual to $u$ if the map witnessing the transversality of $\Theta$ to $\flow$ can taken as a map $\eta_\Theta \colon X \to S^1$ defined on all of $X$ for which $(\eta_\Theta)_* = u$.  Then, $\flow$ can be reparameterized to $\flow^\Theta_s$ so that the time-one map, $\flow^\Theta_1$ restricted to $\Theta$ is the first return map. 

The map $f_\Theta$ was shown to be an expanding irreducible train-track map in \cite{DKL2}, but it is not a homotopy equivalence in general.  The descent to the stable quotient $\phi_{[\Theta]} \colon Q_{[\Theta]} \to Q_{[\Theta]}$ of $(f_{\Theta})_{*}$ is an automorphism if and only if $\ker([\Theta])$ is finitely generated.  In this case we can identify $Q_{[\Theta]} = \ker([\Theta])$, so that the associated splitting of $G$ as a semi-direct product $G = \ker([\Theta]) \rtimes \Z$ has monodromy $\phi_{[\Theta]}$.  The associated expanding irreducible train track map $\bar f_\Theta \colon \bar \Theta \to \bar \Theta$ from Theorem~\ref{T:promoting train track maps} is thus a topological representative for $\phi_{[\Theta]}$.  Therefore, Theorem~\ref{T:same component, all iwip} reduces to proving the following.

\begin{theorem} \label{T:other section iwip tech}
Suppose $f \colon \Gamma \to \Gamma$ is an expanding irreducible train track representative of a hyperbolic fully irreducible automorphism.  Further assume that $\Theta \subset X = X_f$ is a section of the semi-flow $\psi$, as constructed in \cite{DKL2}, with first return map $f_\Theta \colon \Theta \to \Theta$ such that $\ker([\Theta])$ is finitely generated.  Then for the induced train track map $\bar f_{\Theta} \colon \bar \Theta \to \bar \Theta$ from Theorem~\ref{T:promoting train track maps}, $(\bar f_{\Theta})_*$ is a fully irreducible automorphism.
\end{theorem}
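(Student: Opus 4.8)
The plan is to argue by contradiction, using the mapping torus $M_{\bar f_\Theta}$ of $\bar f_\Theta$, equipped with its suspension semi-flow, as a bridge between the dynamics of the induced automorphism and those of the original train track map $f$ on $\Gamma$. By Theorem~\ref{T:promoting train track maps}, $\bar f_\Theta\colon\bar\Theta\to\bar\Theta$ is an expanding irreducible train track map with $(\bar f_\Theta)_* = \phi_{[\Theta]}$ up to conjugacy; since $\ker([\Theta])$ is finitely generated this is an \emph{automorphism} of $\pi_1(\bar\Theta)\cong\ker([\Theta])$, and $\pi_1(M_{\bar f_\Theta})\cong\ker([\Theta])\rtimes_{\phi_{[\Theta]}}\Z\cong G$. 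Assume, for contradiction, that $(\bar f_\Theta)_*$ is not fully irreducible.

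The first step is to produce a pair of mutually inverse flow-equivariant homotopy equivalences relating $M_{\bar f_\Theta}$ and $X_f$. Since $\Theta\subset X_f$ is a cross section of $\psi$ with first return map $f_\Theta$, the results of \cite{DKL,DKL2} identify $X_f$, up to flow-equivariant homotopy equivalence, with the suspension mapping torus $M_{f_\Theta}$ of $f_\Theta$. The graph maps $P\colon\Theta\to\bar\Theta$ and $\bar p\colon\bar\Theta\to\Theta$ supplied by Theorem~\ref{T:promoting train track maps} satisfy $\bar f_\Theta P = P f_\Theta$ and $f_\Theta\bar p = \bar p\bar f_\Theta$, so they induce maps of mapping tori $M_{f_\Theta}\to M_{\bar f_\Theta}$ and $M_{\bar f_\Theta}\to M_{f_\Theta}$ intertwining the respective semi-flows; and the relations $\bar p P = f_\Theta^K$ and $P\bar p = \bar f_\Theta^K$ show that the two compositions are flow-homotopic to the time-$K$ flow map, hence to the identity. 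Composing with the equivalence $M_{f_\Theta}\simeq X_f$ then yields the desired flow-equivariant homotopy equivalences $M_{\bar f_\Theta}\leftrightarrows X_f$.

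Next I would transport the failure of full irreducibility across this correspondence. Since $(\bar f_\Theta)_*$ is not fully irreducible, some positive power of it fixes the conjugacy class of a proper free factor of $\pi_1(\bar\Theta)$; realizing this periodic proper free factor geometrically---which uses that it is finitely generated and may require passing to a finite cover---and then suspending, one obtains a proper nontrivial flow-invariant subcomplex $Y$ in a finite cover $\widehat M\to M_{\bar f_\Theta}$. Transporting $Y$ through the flow-equivariant homotopy equivalences of the previous step (passing to the corresponding finite cover of $X_f$, and flowing as needed to put images into transverse position) produces a proper nontrivial flow-invariant subcomplex $Z$ of a finite cover $\widehat X\to X_f$. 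Finally, since $\Gamma\subset X_f$ is a cross section of $\psi$ with first return map $f$, the cover $\widehat X$ is a folded mapping torus of a lift $h\colon\Delta\to\Delta$ of a positive power of $f$ on a finite cover $\Delta\to\Gamma$; intersecting $Z$ with $\Delta$, after flowing $Z$ so that the intersection is transverse, yields a proper nontrivial $h$-invariant subgraph of $\Delta$. By the general result of Bestvina--Feighn--Handel \cite{BFH97}, the existence of such a subgraph contradicts the full irreducibility of $\phi_0 = f_*$, and this contradiction proves the theorem.

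I expect the main obstacle to be the transfer of a flow-invariant subcomplex through this chain of flow-equivariant maps and down to the cross section $\Gamma$. The maps $P$ and $\bar p$, and the collapsing equivalence $M_{f_\Theta}\to X_f$, are only graph maps, not homeomorphisms, so ``flow-invariant subcomplex'' must be handled via images and preimages together with the semi-flow, and one must verify that both properness and nontriviality---precisely the data fed into \cite{BFH97}---survive every transfer and every passage between the various finite covers. Making this precise is where the fine structure of the folded mapping torus and its semi-flow from \cite{DKL,DKL2}, together with the explicit intertwining relations of Theorem~\ref{T:promoting train track maps}, will be doing the real work.
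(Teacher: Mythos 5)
Your overall outline matches the paper's strategy (flow-homotopy equivalences $M_{\bar f_\Theta}\leftrightarrows X_f$ built from $P$ and $\bar p$ via their intertwining relations, passage to finite covers, a suspended invariant subcomplex, and a contradiction with \cite{BFH97} applied to a lift of a power of $f$ on a finite cover of $\Gamma$). But there is a genuine gap at the step you dispatch with ``realizing this periodic proper free factor geometrically \ldots and then suspending.'' A proper free factor $H<\pi_1(\bar\Theta)$ invariant (up to conjugacy) under $(\bar f_\Theta^n)_*$ does \emph{not} directly give a proper $g$-invariant subgraph in a finite cover of $\bar\Theta$: the cover $\widetilde\Omega\to\bar\Theta$ corresponding to $H$ is infinite, and although $\bar f_\Theta^n$ lifts to a map $h\colon\widetilde\Omega\to\widetilde\Omega$, that lift has no reason to preserve the finite Stallings core, so there is no obvious invariant finite subgraph to suspend. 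This is exactly where the paper's proof does real work, and where the hyperbolicity hypothesis enters (your proposal never uses it): since $(\bar f_\Theta)_*$ is hyperbolic, the classes $h^k\circ\gamma$ are pairwise distinct, the tightened representatives have a bounded number of illegal turns, hence arbitrarily long legal segments, from which one builds a legal loop $\delta$ in the core whose $h$-iterates stay in the core; the set $\Omega=\bigcap_{k}\bigcup_{j\ge k}h^j(\delta(S^1))$ is then a proper $h$-invariant subgraph with an edge. Only after that does one invoke Hall's separability theorem to embed $\Omega$ in a finite cover $\Delta\to\bar\Theta$, and a further power/basepoint argument to choose a lift $g$ of a power of $\bar f_\Theta$ to $\Delta$ with $g(\Omega)=\Omega$. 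Without this construction your finite cover $\widehat M$ and flow-invariant $Y$ never come into existence, so the rest of the argument has nothing to transport.

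A secondary, more repairable issue is the endgame: intersecting the transported invariant set with the cross section $\widetilde\Gamma$ and claiming a \emph{proper} invariant subgraph is delicate (the intersection need not be a subgraph, and properness is not automatic). The paper instead argues that the image $\widetilde\alpha(Z_\Omega)$ contains an open piece of a $2$--cell, hence after flowing contains a full edge of $\widetilde\Gamma$, and then uses \cite{BFH97} (irreducibility of the lifted first return map, via Proposition~\ref{P:cover to graph lift}) to force $\widetilde\alpha(Z_\Omega)\supseteq\widetilde\Gamma$ and hence $\widetilde\alpha(Z_\Omega)=\widetilde X$, contradicting the properness established on the mapping-torus side. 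You correctly identified the transfer step as requiring care, but the soft flow-equivariance propositions handle it; the missing idea is the legal-loop-plus-separability construction of the invariant subgraph in a finite cover.
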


\begin{proof}[Proof of Theorem~\ref{T:same component, all iwip} from Theorem~\ref{T:other section iwip tech}]
Suppose that $\ker(u_0)$, say, is free and $\phi_{u_0}$ is fully irreducible.  Let $f \colon \Gamma \to \Gamma$ be an expanding irreducible train track representative of $\phi_{u_0}$, and let $X,\psi$ be the associated folded mapping torus and suspension semi-flow.  From \cite{DKL2}, there is a section $\Theta \subset X$ such that $[\Theta] = u_1$ whose first return map $f_\Theta \colon \Theta \to \Theta$ has the property that $(f_{\Theta})_*$ descends to the monodromy $\phi_{u_1}$ on the (free) stable quotient $Q_{u_1} = \pi_1(\bar \Theta)$.  By Theorem~\ref{T:other section iwip tech}, $(\bar f_{\Theta})_* = \phi_{u_1}$ is fully irreducible, as required.
\end{proof}

The proof of Theorem~\ref{T:other section iwip tech} requires some new constructions which are carried out in the next few sections.
We need to work in a slightly more general context of semi-flows on compact $2$--complexes, without fixed points.  Cross sections and duality are defined just as above.

\section{Flow-equivariant maps}

Here we describe a general procedure for producing maps between spaces equipped with semi-flows.  
The particular quality of map we will require is  provided by the following:

\begin{defn}
Given spaces $X,Y$ each equipped with semi-flows $\flow^X_s,\flow^Y_s$, then maps $\alpha \colon X \to Y$ and $\beta \colon Y \to X$ are called {\em flow-homotopy inverse maps} if (1) the maps are flow-equivariant, i.e.
\[ \flow^Y_s \alpha = \alpha \flow^X_s \mbox{ and } \flow^X_s \beta = \beta \flow^Y_s \]
for all $s \geq 0$, and (2) there exists $K > 0$ so that $\beta \alpha = \flow^X_K$ and $\alpha \beta = \flow^Y_K$.
Note that $\alpha$ and $\beta$ are indeed homotopy inverses of each other (with the semi-flows defining the required homotopies).  We also call $\alpha$ and $\beta$ {\em flow-homotopy equivalences}.
\end{defn}

\begin{proposition} \label{P:flomotopy-equivalence}
Suppose $X,Y$ are $2$--complexes with semi-flows $\flow^X_s,\flow^Y_s$ and cross sections $\Theta_X \subset X$ and $\Theta_Y \subset Y$.  Further suppose that the first return maps to the cross sections are the restrictions of the time-one maps: $F_X = \flow^X_1|_{\Theta_X} \colon \Theta_X \to \Theta_X$ and  $F_Y = \flow^Y_1|_{\Theta_Y} \colon \Theta_Y \to \Theta_Y$.

If there are maps $\alpha \colon \Theta_X \to \Theta_Y$ and $\beta \colon \Theta_Y \to \Theta_X$ such that
\begin{itemize}
\item $\alpha F_X = F_Y \alpha$ and $\beta F_Y = F_X \beta$, and
\item $\beta \alpha = F_X^k$ and $\alpha \beta = F_Y^k$ for some $k$.
\end{itemize}
then there are flow-homotopy inverse maps $\hat\alpha \colon X \to Y$ and $\hat\beta \colon Y \to X$ extending $\alpha$ and $\beta$, respectively.
\end{proposition}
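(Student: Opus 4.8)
The plan is to construct $\hat\alpha$ and $\hat\beta$ by ``suspending'' $\alpha$ and $\beta$ along the semi-flows. Since $\Theta_X$ is a cross section of $\flow^X$ whose first return map is the time-one map $F_X$, every point of $X$ can be written as $\flow^X_t(\theta)$ with $\theta\in\Theta_X$ and $t\in[0,1]$; indeed $(\theta,t)\mapsto\flow^X_t(\theta)$ is a flow-equivariant surjection $\Theta_X\times[0,\infty)\to X$ which, as $\flow^X_1|_{\Theta_X}=F_X$, factors through the mapping torus $M_{F_X}$ of $F_X$, and likewise for $Y$. One then \emph{defines}
\[ \hat\alpha\bigl(\flow^X_t(\theta)\bigr)=\flow^Y_t\bigl(\alpha(\theta)\bigr),\qquad \hat\beta\bigl(\flow^Y_t(\theta)\bigr)=\flow^X_t\bigl(\beta(\theta)\bigr). \]

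Granting that these formulas are well-defined and continuous, every remaining assertion is formal. The maps restrict to $\alpha$ and $\beta$ on the cross sections (set $t=0$), and they are flow-equivariant since $\hat\alpha(\flow^X_s\flow^X_t\theta)=\flow^Y_{s+t}(\alpha\theta)=\flow^Y_s(\hat\alpha(\flow^X_t\theta))$ and symmetrically for $\hat\beta$. For the composition identities, using $\beta\alpha=F_X^k$ and $\flow^X_k(\theta)=F_X^k(\theta)$ for $\theta\in\Theta_X$:
\[ \hat\beta\hat\alpha\bigl(\flow^X_t\theta\bigr)=\hat\beta\bigl(\flow^Y_t(\alpha\theta)\bigr)=\flow^X_t\bigl(\beta\alpha\,\theta\bigr)=\flow^X_t\bigl(\flow^X_k\theta\bigr)=\flow^X_k\bigl(\flow^X_t\theta\bigr), \]
so $\hat\beta\hat\alpha=\flow^X_k$, and symmetrically $\hat\alpha\hat\beta=\flow^Y_k$ from $\alpha\beta=F_Y^k$. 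Hence $\hat\alpha,\hat\beta$ are flow-homotopy inverse maps with $K=k$.

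The content of the proof is thus the well-definedness (equivalently, continuity) of the two formulas. If $\flow^X_t(\theta_1)=\flow^X_{t'}(\theta_2)$ with $\theta_i\in\Theta_X$, then flow-regularity of $\eta_{\Theta_X}$ forces $t'-t\in\Z$, and, after replacing the $\theta_i$ by forward $F_X$-iterates and using $\alpha F_X=F_Y\alpha$, one is reduced to the case $\flow^X_{t_0}(\theta_1)=\flow^X_{t_0}(\theta_2)$ with $0\le t_0<1$, where it suffices to show $\flow^Y_{t_0}(\alpha\theta_1)=\flow^Y_{t_0}(\alpha\theta_2)$. When $t_0=0$ this reads $\alpha\theta_1=\alpha\theta_2$ and is vacuous; and if the map $M_{F_X}\to X$ is a homeomorphism (for instance when $X$ is itself a mapping torus) no other case arises, so for that one of the two maps there is nothing further to do. The remaining, genuinely delicate, situation is that of two \emph{distinct} flow lines emanating from the cross section which \emph{merge before returning} to it, i.e.\ $\flow^X_{t_0}(\theta_1)=\flow^X_{t_0}(\theta_2)$ with $0<t_0<1$ and $\theta_1\ne\theta_2$. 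Here the equality propagates to $F_X^n\theta_1=F_X^n\theta_2$ for all $n\ge1$, and one must verify that $\alpha$ (and, for $\hat\beta$, $\beta$) collapses $\theta_1,\theta_2$ to points whose forward flow lines have already merged by time $t_0$. The natural way to do this is to observe that the obstruction to $\alpha\theta_1=\alpha\theta_2$ (resp.\ to $\beta\theta_1=\beta\theta_2$) is a loop whose class lies in the image of an arbitrarily high power of $(F_X)_*$ (resp.\ $(F_Y)_*$) --- because merging of the flow lines means some high power of the first return map carries an arc from $\theta_1$ to $\theta_2$ onto a loop --- and then to use the relations $\beta\alpha=F_X^k$, $\alpha\beta=F_Y^k$ to conclude that $\alpha$ (resp.\ $\beta$) factors through precisely the quotient killing such classes. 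In the setting where Proposition~\ref{P:flomotopy-equivalence} is applied, $\alpha$ and $\beta$ are the maps $\bar p$ and $P$ of Theorem~\ref{T:promoting train track maps} --- with $\bar p$ the restriction of a covering map and $P$ a lift of a power of the first return map --- and these were built exactly so that this obstruction vanishes (it is the passage to the stable quotient $Q$).

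I expect this last point --- analyzing the ``folding'' identifications of the semi-flows on the $2$--complexes and confirming that the suspended maps descend across them --- to be the main obstacle; everything else is the soft formalism of transporting a map along a semi-flow, and I would isolate the folding analysis as a separate lemma.
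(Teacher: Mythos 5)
There is a genuine gap, and it sits exactly where you put it: the well-definedness of the formula $\hat\alpha\bigl(\flow^X_t(\theta)\bigr)=\flow^Y_t\bigl(\alpha(\theta)\bigr)$. Under the stated hypotheses this formula is in general \emph{not} well-defined, and no auxiliary ``folding lemma'' can be extracted from those hypotheses to repair it. If two flow lines merge strictly before returning, i.e.\ $\flow^X_{t_0}(\theta_1)=\flow^X_{t_0}(\theta_2)$ with $\theta_1\neq\theta_2$ and $0<t_0<1$, you need $\flow^Y_{t_0}(\alpha\theta_1)=\flow^Y_{t_0}(\alpha\theta_2)$ at the \emph{same} time $t_0$. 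But the hypotheses only concern the first return maps: $\alpha F_X=F_Y\alpha$ gives $F_Y(\alpha\theta_1)=F_Y(\alpha\theta_2)$, i.e.\ merging by time $1$, with no control over the merge time inside $(0,1)$; the images could merge only at some $t_1>t_0$, and then your formula assigns two different values to one point. The fix you sketch (an obstruction class lying in images of high powers of $(F_X)_*$, killed by passage to the stable quotient) is a $\pi_1$-level statement about whether $\alpha\theta_1=\alpha\theta_2$; even if it succeeded it would not address the timing issue, and in any case the proposition must be proved from its own hypotheses, since in the paper it is also applied to lifted maps between finite covers (via Proposition~\ref{P:flomotopy covers}), not only to the maps $\bar p$ and $P$ of Theorem~\ref{T:promoting train track maps}.

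The paper's proof sidesteps the difficulty by never flowing \emph{out of} the section inside the possibly folded complex. It introduces the mapping tori $M_{F_X}$, $M_{F_Y}$ with suspension flows $\Psi^X$, $\Psi^Y$ and the pair of maps $h_0^X(\theta,t)=\flow^X_t(\theta)$ and $h_1^X(x)=\bigl(\flow^X_{\rho_X(x)}(x),\,1-\rho_X(x)\bigr)$, where $\rho_X(x)\in(0,1]$ is the first return time to $\Theta_X$; the key point is that $h_1^X$ only flows points \emph{forward} to the section and records the leftover time, so it is well-defined and continuous because $\rho_X$ is (this is where fullness/flow-regularity of the section enters), and one checks $h_0^Xh_1^X=\flow^X_1$, $h_1^Xh_0^X=\Psi^X_1$ and flow-equivariance. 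The suspensions $\alpha'(\theta,t)=(\alpha(\theta),t)$, $\beta'(\eta,t)=(\beta(\eta),t)$ are tautologically well-defined on the mapping tori, and the desired maps are the compositions $\hat\alpha=h_0^Y\alpha' h_1^X$ and $\hat\beta=h_0^X\beta' h_1^Y$, which satisfy $\hat\beta\hat\alpha=\flow^X_{k+2}$ and $\hat\alpha\hat\beta=\flow^Y_{k+2}$ (the extra $+2$ from the two $h_1h_0$-type insertions is harmless, since the definition of flow-homotopy inverses only asks for \emph{some} $K$). Your suspension map is, in effect, only ever used in the paper as $h_0^Y$, i.e.\ \emph{from} a genuine mapping torus, where it is well-defined; routing through $h_1^X$ rather than inverting $h_0^X$ is the missing idea.
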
 

\begin{proof}
First, let $M_{F_X}$ be the mapping torus of $F_X \colon \Theta_X \to \Theta_X$ with its suspension semi-flow which we denote $\Psi^X_s$.  Construct maps $h_0^X \colon M_{F_X} \to X$ and $h_1^X \colon X \to M_{F_X}$ by
\[ h_0^X(\theta,t) = \flow^X_t(\theta) \quad\mbox{and}\quad h_1^X(x) = (\flow^X_{\rho_X(x)}(x),1-\rho_X(x)) \]
for $\theta\in \Theta_x$ and $t \in [0,1)$, and where $\rho_X(x) \in (0,1]$ is the return time of $x\in X$ to $\Theta_X$.  That is, $\rho_X(x)$ is the smallest number $t> 0$ so that $\flow^X_t(x) \in \Theta_X$.

\begin{claim}
$h_0^X$ and $h_1^X$ are flow-equivariant, and $h_0^Xh_1^X = \flow^X_1$ and $h_1^X h_0^X = \Psi^X_1$.
\end{claim}

\begin{proof}[Proof of Claim]
This claim follows easily from the definitions, but we spell out a proof here.

First, note that for all $\theta \in \Theta_X$, $t \in [0,1)$ and $s > 0$ we have
\begin{eqnarray*} 
h_0^X(\Psi^X_s(\theta,t)) & = & h_0^X(F_X^{\lfloor s+t \rfloor}(\theta),s+t - \lfloor s+t \rfloor)\\
& = & \flow^X_{s+t - \lfloor s+t \rfloor}(F_X^{\lfloor s+t \rfloor}(\theta))\\
& = & \flow^X_{s+t - \lfloor s+t \rfloor} \flow^X_{\lfloor s+t \rfloor}(\theta)\\
& = & \flow^X_{s+t}(\theta) = \flow^X_s(\flow^X_t(\theta))\\
& = & \flow^X_s(h_0^X(\theta,t)). \end{eqnarray*}
Thus $h_0^X$ is flow-equivariant, as required.

Every $x \in X$ has the form $x = \flow^X_t(\theta)$ for some $\theta \in \Theta_X$ and $0 \leq t < 1$.  Then $\rho_X(x) = 1-t$, and hence
\[ h_1^X(x) = h_1^X(\flow^X_t(\theta)) = (\flow^X_{1-t}\flow^X_t(\theta),1-(1-t))= (F_X(\theta),t). \]
Therefore
\begin{eqnarray*}
h_1^X(\flow^X_s(x)) & = & h_1^X(\flow^X_s\flow^X_t(\theta)) = h_1^X (\flow^X_{s+t}(\theta))\\
& = & h_1^X (\flow^X_{s+t - \lfloor s+t \rfloor}F_X^{\lfloor s+t \rfloor}(\theta))\\
& = & (F_X^{\lfloor s+t \rfloor + 1}(\theta),s+t - \lfloor s+t \rfloor)\\
& = & \Psi^X_{s+t}(F_X(\theta),0) = \Psi^X_s(\Psi^X_t(F_X(\theta),0))\\
&  = & \Psi^X_s(F_X(\theta),t) = \Psi^X_s(h_1^X(x)).
\end{eqnarray*}
Thus $h_1^X$ is also flow-equivariant.

Next let $\theta \in \Theta_X$ and $t \in [0,1)$.  Then $\rho_X(\flow^X_t(\theta)) = 1-t$, and thus 
\begin{eqnarray*}
h_1^Xh_0^X(\theta,t) & = & h_1^X(\flow^X_t(\theta))\\
 &  = & (\flow^X_{1-t}(\flow^X_t(\theta)),1-(1-t)) \\
 & =  & (\flow^X_1(\theta),t) = (F_X(\theta),t)\\
 & = & \Psi^X_1(\theta,t).
 \end{eqnarray*}
On the other hand, for all $x\in X$ we have
\[ h_0^Xh_1^X(x) = h_0^X(\flow^X_{\rho_{X}(x)}(x),1-\rho_X(x)) = \flow^X_{1-\rho_X(x)}(\flow^X_{\rho_X(x)}(x)) = \flow^X_1(x).\]
This completes the proof the claim.
\end{proof}

Next, we note that because $\alpha F_X = F_Y \alpha$ and $\beta F_Y = F_X \alpha$, the maps $\alpha \colon \Theta_X \to \Theta_Y$ and $\beta \colon \Theta_Y \to \Theta_X$ determine flow-equivariant maps between mapping tori
\[ \alpha' \colon M_{F_X} \to M_{F_Y} \mbox{ and } \beta' \colon M_{F_Y} \to M_{F_X} \]
given by
\[ \alpha'(\theta,t) = (\alpha(\theta),t) \mbox{ and } \beta'(\eta, t) = (\beta(\eta),t) \]
for all $\theta \in \Theta_X$, $\eta \in \Theta_Y$ and $0 \leq t < 1$.  Since $\beta \alpha = F_X^k$ and $\alpha \beta = F_Y^k$, we have $\beta' \alpha'(\theta,t) = (F_X^k(\theta),t) = \Psi^X_k(\theta,t)$ and $\alpha' \beta'(\eta,t) = (F_Y^k(\eta),t) = \Psi^Y_k(\eta,t)$.

To complete the proof, we must construct maps
\[ \hat\alpha \colon X \to Y \mbox{ and } \hat\beta \colon Y \to X. \]
These are simply the compositions of the maps above:
\[ \hat\alpha = h_0^Y \alpha' h_1^X \mbox{ and } \hat\beta = h_0^X \beta' h_1^Y \]
where $h_0^Y \colon M_{F_Y} \to Y$ and $h_1^Y \colon Y \to M_{F_Y}$ are defined similar to $h_0^X$ and $h_1^X$, respectively.
As a composition of flow-equivariant maps, these are flow-equivariant.  Finally, using the flow-equivariance and the properties of these maps we obtain
\begin{eqnarray*}
\hat\beta \hat\alpha & = & (h_0^X \beta' h_1^Y)(h_0^Y \alpha' h_1^X) = h_0^X \beta' (h_1^Y h_0^Y) \alpha' h_1^X  = h_0^X \beta' \Psi^Y_1 \alpha' h_1^X\\
& = & h_0^X \Psi^X_1 (\beta' \alpha') h_1^X = h_0^X \Psi^X_1 \Psi^X_k h_1^X\\
& = & h_0^X \Psi^X_{k+1} h_1^X = \flow^X_{k+1} h_0^X h_1^X\\
& = & \flow^X_{k+1} \flow^X_1 = \flow^X_{k+2} \end{eqnarray*}
A similar calculation proves $ \hat\alpha\hat\beta = \flow^Y_{k+2}$.
\end{proof}

\section{A few covering constructions}

The proof of Theorem~\ref{T:other section iwip tech} relies on some constructions of, and facts about, covers of $2$--complexes $Y$ with semi-flows $\flow$.  We will freely use facts from covering space theory, typically without mentioning them explicitly.  To begin, we note that for any cover $p \colon \widetilde Y \to Y$ there is a {\em lifted semi-flow}, $\widetilde \flow$ on $\widetilde Y$.  This lifted semiflow has the property that $p\widetilde \flow_t = \flow_t p$ for all $t \geq 0$.  This semi-flow is obtained by viewing $\flow_t p$ as a homotopy of $p$ and lifting this to the unique homotopy of the identity on $\widetilde Y$.  
Observe that for each covering transformation $T \colon \widetilde Y \to \widetilde Y$, the families $T\widetilde \flow_t$ and $\widetilde \flow_t T$  give two homotopies of $T$ that both lift the homotopy $\flow_t p$ of $p$. By the uniqueness of lifted homotopies, it follows that $T\widetilde\flow_t = \widetilde\flow_t T$. Therefore $\widetilde \flow$ commutes with the group of covering transformations of $\widetilde Y$.

\begin{proposition} \label{P:graph lift to cover}
Suppose $Y$ is a connected $2$--complex with a semiflow $\flow$ and a connected section $\Theta \subset Y$ such that the first return map $f \colon \Theta \to \Theta$ is a homotopy equivalence, and so that the semiflow is parameterized so that the restriction of the time-one map is $f$,  that is, $\flow_1|_\Theta = f$.

Suppose $\Delta \to \Theta$ is a connected finite sheeted covering space and $g \colon \Delta \to \Delta$ is a lift of a positive power $f^n$ of $f$.  Then there is a finite sheeted covering space $p \colon \widetilde Y \to Y$ so that the restriction of $p$ to any one of the components of $p^{-1}(\Theta)$ is isomorphic to $\Delta \to \Theta$, and so that $\Delta$ is a section of the lifted semi-flow with first return map equal to $g$.
\end{proposition}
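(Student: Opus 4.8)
The plan is to realize $Y$ (up to flow-equivariant homotopy equivalence) as the mapping torus $M_f$ of the first return map, build the desired finite cover of $M_f$ directly from the combinatorial data of $g \colon \Delta \to \Delta$, and then transport it back to $Y$. Concretely, first I would replace $Y$ by $M_f$: since $f$ is a homotopy equivalence and the semi-flow is parameterized so that $\flow_1|_\Theta = f$, the maps $h_0 \colon M_f \to Y$ and $h_1 \colon Y \to M_f$ built exactly as in the proof of Proposition~\ref{P:flomotopy-equivalence} are flow-equivariant homotopy inverses (with $h_0 h_1 = \flow_1$ and $h_1 h_0 = \Psi_1$). Any finite cover of $M_f$ then pulls back along $h_1$, or pushes forward along $h_0$, to a finite cover of $Y$; and because $h_0, h_1$ restrict to the identity on the embedded copy of $\Theta$ and carry the suspension flow to $\flow$, a section of the lifted flow on the cover of $M_f$ with prescribed first return map transports to one on the cover of $Y$. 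So it suffices to prove the proposition with $Y = M_f$ and $\flow = \Psi$ the suspension semi-flow.

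Next I would construct the cover of $M_f$ from $g$. Write $M_f = \Theta \times [0,1] / (x,1) \sim (f(x),0)$. The key observation is that $g \colon \Delta \to \Delta$ lifts $f^n$, so there is an induced map on an $n$-fold ``unrolling'': form $\widehat\Delta := \Delta \times [0,n] / (x,n) \sim (g(x),0)$, which is naturally an $n$-fold cyclic cover of a mapping torus, and the covering $\Delta \to \Theta$ together with the flow coordinate gives a map $\widehat\Delta \to M_f$ by sending $\Delta \times \{t\}$ via the covering to $\Theta \times \{t \bmod 1\}$ — this is well defined precisely because $g$ covers $f^n$, so the gluing relations are compatible. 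I would check this map is a covering map by a local analysis: away from the vertical seams it is locally the product of the covering $\Delta \to \Theta$ with an interval, and across the seam $t \in \Z$ the compatibility $p_\Delta \circ g = f^n \circ p_\Delta$ makes the two sheets match up, with total degree $n \cdot \deg(\Delta \to \Theta)$, which is finite. By construction $\widehat\Delta$ carries the suspension semi-flow $\widehat\Psi$ of $g$, this lifts $\Psi$, the slice $\Delta \times \{0\} \subset \widehat\Delta$ is a connected section, its first return map is $g$ (one runs around the flow for time $n$, which is exactly how $\widehat\Delta$ was glued), and $p^{-1}(\Theta) = \bigsqcup_{j=0}^{n-1} \Delta \times \{j\}$ with each component mapping to $\Theta$ via a map isomorphic to $\Delta \to \Theta$ (using that $g$ is a lift of $f^n$ to identify the $j$-th slice's covering data with that of $\Delta$).

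Finally I would transport back: set $\widetilde Y := h_0^*\widehat\Delta$ (pullback of the cover $\widehat\Delta \to M_f$ along $h_0 \colon M_f \to Y$), or equivalently push forward via $h_1$; this is a finite cover $p \colon \widetilde Y \to Y$, the lifted flow $\widetilde\flow$ corresponds to $\widehat\Psi$ under the flow-homotopy equivalence, and the component of $p^{-1}(\Theta)$ over the embedded $\Theta \subset Y$ is identified with $\Delta \times \{0\}$, giving a section with first return map $g$. The main obstacle I expect is the gluing/compatibility bookkeeping in constructing $\widehat\Delta \to M_f$ and verifying it is genuinely a finite-sheeted covering — in particular confirming that the relation ``$g$ covers $f^n$'' is exactly the condition needed for the seam identifications to descend to a covering rather than merely a branched or immersed map, and keeping track of connectivity of $\widehat\Delta$ (which holds because $\Delta$ is connected and the $n$ slices are cyclically joined by the flow). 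Everything else — flow-equivariance, the first-return-map computation, and the transport along $h_0, h_1$ — is a routine repetition of the mapping-torus arguments already established above.
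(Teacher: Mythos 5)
There is a genuine gap at the heart of your construction: the map you propose from $\widehat\Delta = \Delta \times [0,n]/(x,n)\sim(g(x),0)$ to $M_f$ is not a covering map in the setting of this paper, where $f$ is an expanding train track map and hence never injective. First, the formula ``send $\Delta\times\{t\}$ via the covering to $\Theta\times\{t\bmod 1\}$'' is not even continuous across the integer levels; the continuous version is $(x,t)\mapsto\bigl(f^{\lfloor t\rfloor}(q(x)),\,t-\lfloor t\rfloor\bigr)$, where $q\colon\Delta\to\Theta$ is the covering. But this corrected map still fails to be a covering when $n\ge 2$: at a point $(x,k)$ with $0<k<n$, the space $\widehat\Delta$ is locally a product $U\times(k-\epsilon,k+\epsilon)$ (there is no seam there), while its image $(z,0)$ with $z=f^k(q(x))$ lies on the seam of $M_f$, where every open neighborhood must contain an incoming flap for \emph{each} point of $f^{-1}(z)$. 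The image of the product neighborhood contains only the one flap through $f^{k-1}(q(x))$, so it is not open and the map is not a local homeomorphism. The same defect shows up on the level of $\pi_1$: the finite cover of $M_f$ corresponding to the subgroup generated by $\pi_1(\Delta)$ and an element projecting to $n\in\Z$ has preimage of $\Theta$ consisting of $n$ slices corresponding to the subgroups $f_*^k(\pi_1(\Delta))$, $k=0,\dots,n-1$, and for $0<k<n$ these are in general \emph{not} conjugate to $\pi_1(\Delta)$; so your claim that every slice $\Delta\times\{j\}$ ``is isomorphic to $\Delta\to\Theta$'' is false, and $\widehat\Delta$ with your projection cannot be the desired cover. (The fact that $g$ lifts $f^n$ only controls the $t=n\sim 0$ seam; it says nothing about the intermediate levels.)

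This is exactly the difficulty the paper's argument is designed to avoid: rather than gluing a mapping torus by hand, one passes to the infinite cyclic cover $\widetilde Y_\Theta$ corresponding to $\ker([\Theta])=\pi_1(\Theta)$, then to the further cover $\widetilde Y_\Delta$ corresponding to $\pi_1(\Delta)$, extends the given lift $g$ of $f^n=t^{-n}\flow^\Theta_n|_{\Theta_0}$ to a lift of $t^{-n}\flow^\Theta_n$ on $\widetilde Y_\Delta$, uses the semi-flow homotopy to produce a lift $T$ of the deck transformation $t^n$, and sets $\widetilde Y=\widetilde Y_\Delta/\langle T\rangle$. This automatically yields an honest finite-sheeted cover of $Y$ (the intermediate slices come out as whatever covers of $\Theta$ they must be), containing an embedded copy of $\Delta$ as a section with first return map $g$. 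If you want to salvage your mapping-torus picture, you would have to replace $M_g$ by the ``cycle of covers'' mapping torus built from lifts $\Delta_0\to\Delta_1\to\cdots\to\Delta_{n-1}\to\Delta_0$ of $f$ whose composite is $g$ (with $\pi_1(\Delta_k)$ corresponding to $f_*^k(\pi_1(\Delta))$), which requires constructing those intermediate lifts and arranging the composite to equal $g$ exactly --- essentially redoing the paper's covering-space bookkeeping. Your outer reduction to $M_f$ via the maps $h_0,h_1$ also needs more care (for instance $h_1|_\Theta=f$, not the identity, so transporting the section and its first return map is not immediate), but the fatal issue is the middle step.
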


\begin{proof}  Let $[\Theta] \in H^1(Y;\R)$ be the dual to $\Theta$.  Since $f$ is a homotopy equivalence, $\pi_1(\Theta) = \ker([\Theta]) \triangleleft \pi_1(Y)$, and we let $\widetilde Y_\Delta \to \widetilde Y_\Theta \to Y$ be the covers corresponding to $\pi_1(\Delta) < \pi_1(\Theta) < \pi_1(Y)$.  Write $\flow^\Delta$ and $\flow^\Theta$ for the lifted semi-flows to these covers.

The inclusion of $\Theta$ into $Y$ lifts to an embedding $\Theta = \Theta_0 \subset \widetilde Y_\Theta$ inducing an isomorphism on fundamental groups.  Since $\flow_1$ restricts to the first return map on $\Theta$, $\flow^\Theta_1(\Theta_0) \subset \widetilde Y_\Theta$ is another lift of $\Theta$, differing from $\Theta_0$ by a covering transformation $t$ that generates the infinite cyclic covering group of $\widetilde Y_\Theta \to Y$.  Let $\Theta_n = t^n \Theta_0$, for all $n \in \Z$, so that $\Theta_1 = t \Theta_0 = \flow^\Theta_1(\Theta_0)$.  Then $t^{-1} \flow^\Theta_1|_{\Theta_0}$ is precisely the map $f \colon \Theta \to \Theta$.  Since $\flow^\Theta$ commutes with $t$, we have  $t^{-k} \flow^\Theta_k|_{\Theta_0}=f^k$ for all $k \geq 1$.

There is also an embedding $\Delta = \Delta_0 \subset \widetilde Y_\Delta$ inducing an isomorphism on fundamental groups so that the restriction of $\widetilde Y_\Delta \to \widetilde Y_\Theta$ to $\Delta_0$ is the covering $\Delta \to \Theta$.  Since $t^{-n} \flow^\Theta_n|_{\Theta_0} = f^n$, and since $\pi_1(\Delta_0) \to \pi_1(\widetilde Y_\Delta)$ is an isomorphism, the lift $g \colon \Delta \to \Delta$ of $f^n$ can be extended to a lift $\widetilde Y_\Delta \to \widetilde Y_\Delta$ of $t^{-n} \flow^\Theta_n$.  On the other hand, $t^{-n} \flow^\Theta_n = \flow^\Theta_n t^{-n}$ is homotopic via the semi-flow $\flow^\Theta$ to $t^{-n}$.  The lifted semiflow is the lift of the homotopy, and it follows that we can lift ($t^{-n}$ and hence) $t^n$ to a map $T \colon \widetilde Y_\Delta \to \widetilde Y_\Delta$ so that $T^{-1} \flow^\Delta_n \colon \widetilde Y_\Delta \to \widetilde Y_\Delta$ is the chosen lift of $t^{-n} \flow^\Theta_n$.  

Being a lift of a covering map, $T$ is itself a covering transformation of $\widetilde Y_\Delta \to Y$, and we form the quotient $\widetilde Y = \widetilde Y_\Delta/\langle T \rangle$.  Since $T$ commutes with $\flow^\Delta$, it descends to a semi-flow $\widetilde \flow$ on $\widetilde Y$.
The restriction to $\Delta_0$ of $\widetilde Y_\Delta \to \widetilde Y$ is an embedding of $\Delta$ into $\widetilde Y$, and the first return of $\widetilde \flow$ to this copy of $\Delta$ occurs precisely at time $n$.
Since we have factored out by $\langle T \rangle$, this first return map is the descent of $T^{-1} \flow^\Delta_n$ restricted to $\Delta_0$, and is thus precisely $g$, as required.
\end{proof}

The following provides a converse to the previous proposition which we will need.

\begin{proposition} \label{P:cover to graph lift}
Suppose that $Y$ is a connected $2$--complex with a semiflow $\flow$ and connected cross section $\Theta \subset Y$ so that the first return map $f \colon \Theta \to \Theta$ is the restriction of the time-$1$ map, $\flow_1|_\Theta = f$ and is a homotopy equivalence.  Given a connected, finite sheeted covering space $p \colon \widetilde Y \to Y$, any component $\Delta \subset p^{-1}(\Theta)$ is a section, and the first return map $g \colon \Delta \to \Delta$ of the lifted semi-flow is a lift of a power of $f$.
\end{proposition}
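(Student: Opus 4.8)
The plan is to isolate the component $\Delta$ inside $p^{-1}(\Theta)$ by means of a single circle-valued map on $\widetilde Y$ built from $\eta_\Theta$, and then to read off the section property and the first-return map from that picture. Recall from the discussion before Proposition~\ref{P:graph lift to cover} that $\widetilde Y$ carries a lifted semi-flow $\widetilde\flow$ with $p\,\widetilde\flow_t=\flow_t\,p$, and that, $p$ being a covering map, $p^{-1}(\Theta)$ is a finite-sheeted covering space of $\Theta$ and a subcomplex of $\widetilde Y$; in particular $\Delta$ is a connected finite cover of $\Theta$ via $p|_\Delta$. Since $f=\flow_1|_\Theta$ is the first return map, every forward $\flow$-orbit meets $\Theta$ along an arithmetic progression of times of common difference $1$, and hence every forward $\widetilde\flow$-orbit meets $p^{-1}(\Theta)$ at unit-spaced times as well.

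The next step is to set $\bar\eta=\eta_\Theta\circ p\colon\widetilde Y\to S^1$, so that $\bar\eta^{-1}(\ast)=p^{-1}(\Theta)$ and, by flow-regularity of $\eta_\Theta$ together with the fact that $f$ is the first return map, the composite $t\mapsto\bar\eta(\widetilde\flow_t(\widetilde y))$ is locally injective and winds exactly once around $S^1$ between consecutive times at which the orbit meets $p^{-1}(\Theta)$. Because $f$ is a homotopy equivalence, $\pi_1(\Theta)=\ker([\Theta])$ is normal in $\pi_1(Y)$, and restricting the epimorphism $[\Theta]\colon\pi_1(Y)\to\Z$ to the finite-index subgroup $H:=p_*\pi_1(\widetilde Y)$ gives a homomorphism with image $d\Z$ for some $d\ge1$; this is exactly the homomorphism induced by $\bar\eta$. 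Hence $\bar\eta$ lifts through the degree-$d$ covering $S^1\to S^1$, $z\mapsto z^d$, to a map $\eta_\Delta\colon\widetilde Y\to S^1$ inducing a surjection onto $\Z$, and $p^{-1}(\Theta)=\bar\eta^{-1}(\ast)$ is partitioned into the $d$ fibers $\eta_\Delta^{-1}(\omega_0),\dots,\eta_\Delta^{-1}(\omega_{d-1})$ of $\eta_\Delta$ over the $d$ points lying above $\ast$. On the other hand, the components of $p^{-1}(\Theta)$ biject with the double cosets $H\backslash\pi_1(Y)/\pi_1(\Theta)$; since $\pi_1(\Theta)$ is normal, these are the right cosets of the index-$d$ subgroup $H\cdot\ker([\Theta])$, so there are exactly $d$ of them. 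Now $\widetilde\flow_1$ carries each fiber $\eta_\Delta^{-1}(\omega_j)$ into $\eta_\Delta^{-1}(\omega_{j+1})$ (indices mod $d$), so every forward orbit runs successively through $\eta_\Delta^{-1}(\omega_{j_0}),\eta_\Delta^{-1}(\omega_{j_0+1}),\dots$ and hence through all $d$ fibers, each of which is therefore nonempty; comparing with the count of $d$ components forces each $\eta_\Delta^{-1}(\omega_j)$ to be a single component of $p^{-1}(\Theta)$. After relabelling we may assume $\Delta=\eta_\Delta^{-1}(\omega_0)$, and $\widetilde\flow_1$ permutes the $d$ components of $p^{-1}(\Theta)$ as a single $d$-cycle.

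The conclusions are now immediate. The map $\eta_\Delta$ is flow-regular on $\widetilde Y$ (local injectivity of $t\mapsto\eta_\Delta(\widetilde\flow_t(\widetilde y))$ is inherited from that of $t\mapsto\bar\eta(\widetilde\flow_t(\widetilde y))$ by lifting through the covering $S^1\to S^1$) and has $\eta_\Delta^{-1}(\omega_0)=\Delta$, while every forward orbit meets $\Delta$ because it meets each of the $d$ components; hence $\Delta$ is a section of $\widetilde\flow$. Moreover $\widetilde\flow$ meets $p^{-1}(\Theta)$ only at integer times and $\widetilde\flow_1$ cyclically permutes the $d$ components, so the first return time to $\Delta$ is the constant $d$, and the first return map is $g=\widetilde\flow_d|_\Delta$. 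Finally $p\,\widetilde\flow_d=\flow_d\,p$ together with $\flow_d|_\Theta=f^d$ gives $p\circ g=f^d\circ(p|_\Delta)$, so $g$ is a lift of the power $f^d$ through the covering $p|_\Delta\colon\Delta\to\Theta$, as required.

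I expect the main obstacle to be the bookkeeping in the second paragraph, namely showing that the $d$ fibers of $\eta_\Delta$ over $\ast$ coincide with the $d$ components of $p^{-1}(\Theta)$; this requires combining the group-theoretic count of components (through normality of $\pi_1(\Theta)$ and the index-$d$ subgroup $H\cdot\ker([\Theta])$) with the dynamical fact that each orbit runs through all the fibers. Everything else, including the existence and flow-regularity of $\eta_\Delta$, the section property, and the identification of $g$ with $\widetilde\flow_d|_\Delta$, is routine covering-space theory once this matching is in place.
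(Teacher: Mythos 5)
Your argument is correct, and it reaches the conclusion by a genuinely different mechanism than the paper's proof. The paper works from above: it passes to the universal cover $\widehat Y$ (through the infinite cyclic cover corresponding to $\ker([\Theta])$), observes that the $\Z$--indexed copies $\widehat\Theta_n$ of the universal cover of $\Theta$ are advanced by one index by the time-one map, and then pushes these down to the finite cover $\widetilde Y$; since images of the connected sets $\widehat\Theta_n$ are automatically components of $p^{-1}(\Theta)$, one gets for free a cyclically ordered family $\widetilde\Theta_1,\dots,\widetilde\Theta_j$ permuted by $\widetilde\flow_1$, whence the first return to any one of them is $\widetilde\flow_j$, a lift of $f^j$. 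You instead work entirely inside $\widetilde Y$: you lift $\eta_\Theta\circ p$ through the degree-$d$ cover of $S^1$, show the flow advances the $d$ fibers over the preimages of $\ast$ cyclically, and then identify fibers with components by matching two counts (the double-coset count $H\backslash\pi_1(Y)/\ker([\Theta])$, using normality of $\ker([\Theta])$, against the $d$ fibers, each nonempty because orbits visit all of them). What your route buys is that it avoids the universal cover, makes the first return time $d$ completely explicit, and exhibits $\Delta$ as a \emph{full} section with its own flow-regular map $\eta_\Delta$ (a point the paper leaves implicit); what the paper's route buys is that connectivity of each piece comes for free, so no counting step is needed. Both arguments rest on the same background facts, namely that $\pi_1(\Theta)=\ker([\Theta])$ is normal because $f$ is a homotopy equivalence and that the return time to $\Theta$ is identically $1$. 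Two small points you could make explicit: the statement's ``$f^{-1}(\Theta)$'' is a typo for $p^{-1}(\Theta)$, which is how you correctly read it; and the consistency of the cyclic labeling of the fibers uses that the loop $t\mapsto\eta_\Theta(\flow_t(\theta))$, $t\in[0,1]$, has the same degree ($+1$ after orienting $S^1$) for every $\theta$, which follows from local injectivity, the return time being $1$, and connectedness of $\Theta$.
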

\begin{proof}
Every cover of $Y$ is a quotient of the universal covering $\widehat Y \to Y$, and the proposition will follow easily from a good description of this $\widehat Y$, which we now explain.  We first let $\widetilde Y_\Theta \to Y$ denote the cover corresponding to $\pi_1(\Theta) = \ker([\Theta])$.  As in the previous proof, we have homeomorphic copies of $\Theta$ in $\widetilde Y_\Theta$, which we denote $\{ \Theta_n\}_{n \in \Z}$, so that a generator $t$ of the covering group has $t \Theta_n = \Theta_{n+1}$ for all $n$.   Furthermore, the lifted semi-flow $\flow^\Theta$ to $\widetilde Y_\Theta$ has $\flow^\Theta_1(\Theta_n) = \Theta_{n+1}$, and $t^{-1} \flow^\Theta_1 \colon \Theta_n \to \Theta_n$ is the map $f$, with respect to the homeomorphism $\Theta_n \cong \Theta$ obtained by restricting $\widetilde Y_\Theta \to Y$ to $\Theta_n$.

Since the inclusion $\Theta_n \subset \widetilde Y_\Theta$ is an isomorphism on fundamental group, the universal cover $\widehat Y \to \widetilde Y_\Theta$ contains copies of the universal cover of $\Theta$, say $\{\widehat \Theta_n\}_{n \in \Z}$ so that for each $n$, $\widehat \Theta_n$ is the preimage of $\Theta_n$.  The lifted semiflow $\widehat \flow$ to $\widehat Y$ has time--$1$ map sending $\widehat \Theta_n$ to $\widehat \Theta_{n+1}$ for all $n$.  In particular, for any integer $k > 0$, $\widehat \flow_k (\widehat \Theta_n)= \widehat \Theta_{n+k}$, and $\widehat \flow_k|_{\widehat \Theta_n}$ is a lift of the $k^{th}$ power of $f$ from the $n^{th}$ copy of the universal cover of $\Theta$ to the $(n+k)^{th}$ copy.

Any connected, finite sheeted cover $\widetilde Y \to Y$ is a quotient of $\widehat Y$, the lifted semi-flow $\widetilde \flow$ is the descent of $\widehat \flow$ to $\widetilde Y$, and $\{ \widehat \Theta_n\}_{n \in \Z}$ push down to finitely many graphs $\widetilde \Theta_1,\ldots,\widetilde \Theta_j$ in $\widetilde Y$, each of which is a finite sheeted covering space of $\Theta$ (here $j$ is the subgroup index in $\Z$ of the image of $\pi_1(\widetilde \Theta)$ under the homomorphism $[\Theta]$).  We may choose our indices $1,\ldots,j$ so that $\widehat \Theta_n$ pushes down to $\widetilde \Theta_k$, where $k \equiv n$ mod $j$ for all $n$.  From the description of $\widehat \flow$, it follows that $\widetilde \flow_1(\widetilde \Theta_k) = \widetilde \Theta_{k+1}$, with indices taken modulo $j$.  Consequently, $\widetilde \flow_j(\widetilde \Theta_k) = \widetilde \Theta_k$ for all $k$, and the restriction of $\widetilde \flow_j$ to any one is a lift of $f^j$.  As this is the first return map, we are done.
\end{proof}

\begin{proposition} \label{P:flomotopy covers}
Suppose $X$ and $Y$ are connected $2$--complexes equipped with semi-flows $\flow^X$ and $\flow^Y$, respectively.  Given flow-homotopy inverse maps $\alpha \colon X \to Y$ and $\beta \colon Y \to X$, and a connected finite sheeted cover $p \colon \widetilde X \to X$, there exists a connected finite sheeted cover $q \colon \widetilde Y \to Y$ and lifts of $\alpha$ and $\beta$ which are flow-homotopy inverses:
\[ \xymatrix{  \widetilde X \ar@/^/^{\widetilde \alpha}[r] \ar_p[d] & \widetilde Y \ar@/^/^{\widetilde \beta}[l] \ar^q[d] \\
X \ar@/^/^\alpha[r] & Y \ar@/^/^\beta[l] }\]
\end{proposition}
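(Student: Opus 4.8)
The plan is to build $\widetilde Y$ as the cover of $Y$ corresponding to the $\alpha_*$-image of the subgroup of $\pi_1(X)$ defining $\widetilde X$, to lift $\alpha$ and then $\beta$ via the lifting criterion, and to check that the resulting lifts are flow-homotopy inverses. Throughout, write $\flow^{\widetilde X}$ and $\flow^{\widetilde Y}$ for the lifted semi-flows on the covers, which exist and commute with deck transformations as recalled at the start of this section. First, fix a basepoint $\widetilde x_0\in\widetilde X$, set $x_0=p(\widetilde x_0)$, and let $H=p_*\pi_1(\widetilde X,\widetilde x_0)\le\pi_1(X,x_0)$, a subgroup of finite index. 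Since $\beta\alpha=\flow^X_K$ and $\alpha\beta=\flow^Y_K$ are homotopic to the identities of $X$ and $Y$ through the semi-flows, $\alpha$ and $\beta$ are homotopy equivalences; in particular $\alpha_*\colon\pi_1(X,x_0)\to\pi_1(Y,\alpha x_0)$ is an isomorphism, so $\alpha_*H$ has the same finite index in $\pi_1(Y,\alpha x_0)$. Let $q\colon\widetilde Y\to Y$ be the connected cover corresponding to $\alpha_*H$, based at some $\widetilde y_0$ over $\alpha x_0$; this is finite-sheeted. Since $(\alpha p)_*\pi_1(\widetilde X,\widetilde x_0)=\alpha_*H=q_*\pi_1(\widetilde Y,\widetilde y_0)$, the lifting criterion yields a unique based lift $\widetilde\alpha\colon(\widetilde X,\widetilde x_0)\to(\widetilde Y,\widetilde y_0)$ of $\alpha p$, so that $q\widetilde\alpha=\alpha p$.

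Next I would observe that any lift of a flow-equivariant map through these covers is automatically flow-equivariant. For $\widetilde\alpha$: the assignment $(x,s)\mapsto\flow^Y_s(\alpha p(x))$ is a homotopy of $\alpha p=q\widetilde\alpha$, and using $p\flow^{\widetilde X}_s=\flow^X_s p$ and $\alpha\flow^X_s=\flow^Y_s\alpha$ one checks that both $(x,s)\mapsto\flow^{\widetilde Y}_s(\widetilde\alpha(x))$ and $(x,s)\mapsto\widetilde\alpha(\flow^{\widetilde X}_s(x))$ are lifts of it, agreeing at $s=0$; uniqueness of homotopy lifting then forces $\flow^{\widetilde Y}_s\widetilde\alpha=\widetilde\alpha\flow^{\widetilde X}_s$ for all $s\ge0$. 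The identical argument will apply to whatever lift $\widetilde\beta$ of $\beta$ is produced.

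The crux is to lift $\beta$ to a map $\widetilde\beta$ for which the two round-trip compositions become honest time-$K$ maps of the lifted flows, rather than deck-twisted versions of them; this requires pinning down the target basepoint correctly. I would take $\widetilde\beta$ to be the lift of $\beta q$ sending $\widetilde y_0$ to $z_0\colonequals\flow^{\widetilde X}_K(\widetilde x_0)$, which lies over $\flow^X_K(x_0)=\beta(\alpha x_0)=\beta q(\widetilde y_0)$. To see this lift exists, note that $\delta(t)=\flow^X_t(x_0)$, $t\in[0,K]$, is the basepoint-track of the homotopy $\flow^X_t$ from $\mathrm{id}_X$ to $\beta\alpha$, whence $(\beta q)_*\pi_1(\widetilde Y,\widetilde y_0)=\beta_*\alpha_*H=(\beta\alpha)_*H=\{[\bar\delta\,h\,\delta]:h\in H\}$; on the other hand $\widetilde\delta(t)=\flow^{\widetilde X}_t(\widetilde x_0)$ projects to $\delta$, so the change-of-basepoint isomorphism along $\widetilde\delta$ gives $p_*\pi_1(\widetilde X,z_0)=\{[\bar\delta\,h\,\delta]:h\in H\}$ as well. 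Hence $(\beta q)_*\pi_1(\widetilde Y,\widetilde y_0)=p_*\pi_1(\widetilde X,z_0)$, and the lifting criterion provides the desired $\widetilde\beta$ with $\widetilde\beta(\widetilde y_0)=z_0$.

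Finally I would assemble the conclusion. By the second paragraph $\widetilde\beta$ is flow-equivariant. The maps $\widetilde\beta\widetilde\alpha$ and $\flow^{\widetilde X}_K$ both lift $\beta\alpha p=\flow^X_K p$ and agree at $\widetilde x_0$ (both send it to $z_0$), so $\widetilde\beta\widetilde\alpha=\flow^{\widetilde X}_K$ by uniqueness of lifts and connectedness of $\widetilde X$. Likewise $\widetilde\alpha\widetilde\beta$ and $\flow^{\widetilde Y}_K$ both lift $\alpha\beta q=\flow^Y_K q$, and they agree at $\widetilde y_0$ since $\widetilde\alpha\widetilde\beta(\widetilde y_0)=\widetilde\alpha(z_0)=\widetilde\alpha(\flow^{\widetilde X}_K(\widetilde x_0))=\flow^{\widetilde Y}_K(\widetilde\alpha(\widetilde x_0))=\flow^{\widetilde Y}_K(\widetilde y_0)$ by flow-equivariance of $\widetilde\alpha$; hence $\widetilde\alpha\widetilde\beta=\flow^{\widetilde Y}_K$. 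Thus $\widetilde\alpha,\widetilde\beta$ are flow-homotopy inverse lifts of $\alpha,\beta$ with the same constant $K$, proving the proposition. The main obstacle is the third paragraph: one must choose the target basepoint $z_0$ of $\widetilde\beta$ so that it simultaneously exists (the subgroup identity) and makes \emph{both} compositions equal to $\flow_K$ on the nose rather than up to a deck transformation.
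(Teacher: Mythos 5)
Your proposal is correct and follows essentially the same route as the paper: construct $\widetilde Y$ from the subgroup $\alpha_*(p_*\pi_1(\widetilde X))$, lift $\alpha$ and $\beta$, get flow-equivariance from uniqueness of (path/homotopy) lifting, and normalize $\widetilde\beta$ so that both round-trip compositions are exactly the time-$K$ maps. The only difference is bookkeeping: you pin down $\widetilde\beta$ from the start by sending $\widetilde y_0$ to $\flow^{\widetilde X}_K(\widetilde x_0)$ and invoke uniqueness of lifts, whereas the paper takes an arbitrary lift, corrects it by a deck transformation to force $\widetilde\beta\widetilde\alpha=\widetilde\flow^X_K$, and then checks the other composition agrees with $\widetilde\flow^Y_K$ at a point of the image of $\widetilde\alpha$.
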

\begin{proof}
Let $q \colon \widetilde Y \to Y$ be the connected cover corresponding to $\alpha_*(p_*(\pi_1(\widetilde X)))$.  Since $p_*(\pi_1(\widetilde X))$ has finite index in $\pi_1(X)$, and $\alpha_*$ is an isomorphism, it follows that $q_*(\pi_1(\widetilde Y))$ has finite index in $\pi_1(Y)$, and hence $q$ is a finite sheeted cover.

From basic covering space theory, $\alpha$ lifts to a map $\widetilde \alpha \colon \widetilde X \to \widetilde Y$ so that $q \widetilde \alpha = \alpha p$.  Since $\beta$ is a homotopy inverse of $\alpha$, $\beta_*(q_*(\pi_1(\widetilde Y)))$ is (conjugate to) $p_*(\pi_1(\widetilde X))$. By changing the basepoint of $\widetilde{Y}$ to adjust this conjugate, it follows that there is a lift $\widetilde \beta \colon \widetilde Y \to \widetilde X$ so that $p \widetilde \beta = \beta q$.  Let $\widetilde \flow^X$ and $\widetilde \flow^Y$ denote the lifted semi-flows, and note that $p \widetilde \beta \widetilde \alpha = \beta q \widetilde \alpha = \beta \alpha p = \flow^X_K p$ for some $K > 0$.   Therefore, $\widetilde \beta \widetilde \alpha$ is a lift of $\flow^X_K$.  

Since $\flow^X_t$, $t \in [0,K]$ defines a homotopy from the identity to $\flow^X_K$, we can lift the homotopy and thus $\widetilde \beta \widetilde \alpha$ is homotopic (via some lift of $\flow^X_t$) to a map covering the identity, i.e.~a covering transformation for $p$.
Composing $\widetilde \beta$ with the inverse of this covering transformation, we get another lift of $\beta$ (which we continue to call $\widetilde \beta$) so that now $\widetilde \beta \widetilde \alpha = \widetilde \flow^X_K$.  We claim that $\widetilde \alpha$ and $\widetilde \beta$ are flow-homotopy inverses.

First, we verify that $\widetilde \alpha$ and $\widetilde \beta$ are flow-equivariant.  To see this, first note that for every $\widetilde x \in \widetilde X$, the paths $t \mapsto \widetilde \alpha \widetilde \flow^X_t(\widetilde x)$ and $t \mapsto \widetilde \flow^Y_t \widetilde \alpha(\widetilde x)$ are both lifts of the path $t \mapsto \alpha \flow^X_t p(\widetilde x) = \flow^Y_t \alpha p(\widetilde x)$.   Since these have the same value $\widetilde \alpha(\widetilde x)$ at time $t = 0$, uniqueness of path lifting guarantees that $\widetilde \flow^Y_t  \widetilde \alpha = \widetilde \alpha \widetilde \flow^X_t$, so $\widetilde \alpha$ is flow-equivariant.  The same argument works for $\widetilde \beta$.

Our choice of $\widetilde \beta$ ensures that $\widetilde \beta \widetilde \alpha = \widetilde \flow^X_K$.  A similar calculation as above ensures $\widetilde \alpha \widetilde \beta$ differs from $\widetilde \flow^Y_K$ by a covering transformation.  To complete the proof, we must show that this covering transformation is trivial.  To do this, we pick any point in the image of $\widetilde \alpha$, $\widetilde \alpha(\widetilde x) \in \widetilde Y$, and then observe that
\[ \widetilde \alpha \widetilde \beta \widetilde \alpha(\widetilde x ) = \widetilde \alpha \widetilde \flow^X_K(\widetilde x) = \widetilde \flow^Y_K \widetilde \alpha(\widetilde x). \]
Thus $\widetilde \alpha \widetilde \beta$ agrees with $\widetilde \flow^Y_K$ at the point $\widetilde \alpha(\widetilde x)$.  But since these differ by a covering transformation and they agree at a point, it follows that the covering transformation is the identity, and hence $\widetilde \alpha \widetilde \beta = \widetilde \flow^Y_K$.
\end{proof}

\section{Full irreducibility}

\begin{proof}[Proof of Theorem~\ref{T:other section iwip tech}]

Recall that we have the folded mapping torus $X = X_f$, for $f \colon \Gamma \to \Gamma$ an expanding irreducible train track representative of a hyperbolic fully irreducible automorphism.  We have $\Theta \subset X$ a section with first return map $f_\Theta \colon \Theta \to \Theta$, an expanding irreducible train track, inducing an automorphism on the stable quotient.  This automorphism is represented by the expanding irreducible train track map $\bar f_\Theta \colon \bar \Theta \to \bar \Theta$ from Theorem~\ref{T:promoting train track maps}.   Now suppose that $(\bar f_{\Theta})_*$ is not fully irreducible.

\begin{claim}  There is a finite sheeted covering $\Delta \to \bar \Theta$, a lift $g \colon \Delta \to \Delta$ of a power of $\bar f_\Theta$, and a proper subgraph $\Omega \subset \Delta$ containing at least one edge so that $g(\Omega) = \Omega$.
\end{claim}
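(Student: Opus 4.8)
The plan is to extract, from the failure of full irreducibility, a proper $(\bar f_\Theta)_*$-periodic free factor that carries the attracting lamination of $\bar f_\Theta$; to realize the union of its legal loops inside a suitable finite cover of $\bar\Theta$; and to promote the resulting forward-invariant subgraph to a genuinely $g$-invariant one by intersecting its iterates.

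First I would set up the cover. Since $(\bar f_\Theta)_*$ is a free-group automorphism which is not fully irreducible, and since $\bar f_\Theta$ is expanding (so $(\bar f_\Theta)_*$ has exponential growth with a well-defined attracting lamination $\Lambda$), the lamination $\Lambda$ is not arational; by the lamination machinery of \cite{BFH97} for $\Out(F_N)$ there are therefore an integer $k\ge1$ and a nontrivial proper free factor $A$ of $\pi_1(\bar\Theta)$ with $(\bar f_\Theta)_*^k[A]=[A]$ and with $A$ carrying $\Lambda$. Fix a representative automorphism $\Phi$ of $(\bar f_\Theta)_*^k$ with $\Phi(A)=A$ (possible after conjugating $A$), choose any finite-index subgroup $H_1\le\pi_1(\bar\Theta)$ containing $A$, and put $H=\bigcap_j\Phi^j(H_1)$: this is a finite-index, $\Phi$-invariant subgroup still containing $A$. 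Let $q\colon\Delta\to\bar\Theta$ be the finite cover corresponding to $H$, let $p_\Delta\colon\widehat\Delta_A\to\Delta$ be the (infinite) cover corresponding to $A\le H$, and set $\widehat q=q\,p_\Delta$. Since $\Phi(H)=H$, the power $\bar f_\Theta^k$ lifts to a map $g\colon\Delta\to\Delta$; since $\Phi$ restricts to an automorphism of $\pi_1(\Delta)=H$ fixing $A$, the map $g$ lifts further to $\widehat g\colon\widehat\Delta_A\to\widehat\Delta_A$ with $p_\Delta\widehat g=g\,p_\Delta$ and $\widehat q\,\widehat g=\bar f_\Theta^k\,\widehat q$. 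As lifts of a power of the expanding train-track map $\bar f_\Theta$, both $g$ and $\widehat g$ are again expanding train-track maps, because lifts of immersions are immersions and covering maps preserve combinatorial length.

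Now I would build the subgraph. Let $\Omega^{\mathrm{leg}}\subseteq\widehat\Delta_A$ be the union of all $\widehat g$-legal loops, i.e.\ loops $\alpha$ for which $\widehat g^n\circ\alpha$ is an immersion for every $n\ge0$. Each legal loop is reduced and represents a conjugacy class of $\pi_1(\widehat\Delta_A)=A$, hence lies in the convex core of $\widehat\Delta_A$, which is a finite graph with fundamental group $A$; so $\Omega^{\mathrm{leg}}$ is a finite subgraph of that core. It contains at least one edge: a leaf of $\Lambda$, realized in $\bar\Theta$ as a bi-infinite legal line and lifted to $\widehat\Delta_A$, remains inside the finite convex core precisely because $A$ carries $\Lambda$, and any bi-infinite legal line in a finite graph contains a legal loop $\gamma_0$ by pigeonhole on oriented edges. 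Moreover $\widehat g$ carries legal loops to legal loops, so $\widehat g(\Omega^{\mathrm{leg}})\subseteq\Omega^{\mathrm{leg}}$ --- this is the mechanism already used in the proof of Proposition~\ref{prop:lifted_tt_map}. Setting $\Omega=p_\Delta(\Omega^{\mathrm{leg}})\subseteq\Delta$, we get $g(\Omega)=p_\Delta(\widehat g(\Omega^{\mathrm{leg}}))\subseteq\Omega$, and $\Omega$ contains an edge. It is proper in $\Delta$: it lies in $p_\Delta(\mathrm{core}(\widehat\Delta_A))$, and the restriction of $p_\Delta$ to $\mathrm{core}(\widehat\Delta_A)$ is a finite-degree covering onto its image, so $A$ has finite index in the fundamental group of that image; were the image all of $\Delta$, we would get $[H:A]<\infty$, impossible since a nontrivial proper free factor has infinite index while $H$ has finite index in $\pi_1(\bar\Theta)$. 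Finally, since $g(\Omega)\subseteq\Omega$ the subgraphs $g^n(\Omega)$ are nested and stabilize to $\Omega_\infty=g^N(\Omega)$ for $N$ large; this $\Omega_\infty$ satisfies $g(\Omega_\infty)=\Omega_\infty$, is a proper subgraph of $\Delta$, and contains an edge because it contains $g^N(p_\Delta(\gamma_0))$, a high iterate of the $g$-legal loop $p_\Delta(\gamma_0)$, whose combinatorial length grows with $N$ since $g$ is expanding. The cover $\Delta\to\bar\Theta$, the lift $g$ of $\bar f_\Theta^k$, and the subgraph $\Omega_\infty$ are then the objects asserted by the claim.

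I expect the genuine obstacle to be the first step --- extracting a proper periodic free factor that carries the attracting lamination --- since this is where the substantive $\Out(F_N)$ input (laminations and arationality, \cite{BFH97}) is needed; the remainder is covering-space bookkeeping, the two points requiring care being that the legal loops of the $A$-cover all lie in its finite convex core (which is precisely where ``$A$ carries $\Lambda$'' is used) and that the choice $H=\bigcap_j\Phi^j(H_1)$ is what makes the finite cover $\Delta$ simultaneously compatible with a lift of a power of $\bar f_\Theta$ and with the free factor $A$.
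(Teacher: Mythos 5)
Your first step is the one that fails. From ``$(\bar f_\Theta)_*$ is not fully irreducible'' you can certainly extract a proper free factor $A$ and $k\ge 1$ with $(\bar f_\Theta)_*^k[A]=[A]$, but the additional assertion that $A$ can be chosen to \emph{carry the attracting lamination} $\Lambda$ is not a theorem and is false in general: a non--fully irreducible outer automorphism with an expanding irreducible train track representative can perfectly well have a filling attracting lamination, with the invariant factor carrying no exponential dynamics at all (the standard examples are automorphisms induced by pseudo-Anosov maps of multiply punctured surfaces fixing a puncture, where the invariant factor is cyclic and the lamination fills). Your whole construction hinges on this carrying property: it is what puts a leaf of $\Lambda$, and hence a legal loop, inside the core of the $A$--cover. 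Without it, the set $\Omega^{\mathrm{leg}}$ of legal loops in that core can be empty (in the cyclic-factor example the core is a circle whose loop cannot be legal, since a legal loop is immersed and its iterates grow in length while staying in a fixed free homotopy class). Note also that your argument nowhere uses the hypothesis that $(\bar f_\Theta)_*$ is a \emph{hyperbolic} automorphism, which is exactly the input the paper uses at this point: starting from an \emph{arbitrary} periodic proper free factor $H$, one takes any essential loop $\gamma$ in the $H$--cover and uses hyperbolicity to see that the classes $h^k\circ\gamma$ are pairwise distinct; since neither the lift $h$ nor tightening increases the number of illegal turns, the tightened iterates have uniformly bounded illegal turns and unbounded length, hence contain arbitrarily long legal segments, and a pigeonhole argument then produces a legal loop in the core. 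That mechanism is what replaces your unjustified ``$A$ carries $\Lambda$''.

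There is a second, more repairable, problem: your properness argument for $\Omega=p_\Delta(\Omega^{\mathrm{leg}})$ does not work as written. The restriction of the covering $p_\Delta$ to the core of $\widehat\Delta_A$ is only an immersion, not a covering onto its image, so you cannot conclude that $A$ has finite index in the fundamental group of that image; and with $H_1$ an \emph{arbitrary} finite-index subgroup containing $A$, the image in $\Delta$ of the core (or of your legal subgraph) may a priori fail to be a proper subgraph. The paper avoids this by invoking subgroup separability (Hall's theorem) to choose the finite cover so that the relevant finite invariant subgraph of the $H$--cover \emph{embeds} in $\Delta$; properness then follows because a subgraph carrying a finite-index subgroup of $\pi_1(\bar\Theta)$ cannot map into (a conjugate of) the infinite-index factor $H$. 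If you insert the hyperbolicity argument above to produce the legal loop, and choose $H_1$ via separability so that the core of the $A$--cover embeds in $\Delta$ (your intersection $H=\bigcap_j\Phi^j(H_1)$ is then fine, being finite index since there are only finitely many subgroups of a given index), your remaining steps --- lifting $\bar f_\Theta^k$ to $\Delta$, forward invariance of the union of legal loops, and stabilizing the nested images $g^n(\Omega)$ to get $g(\Omega_\infty)=\Omega_\infty$ --- do go through and essentially reproduce the paper's argument, which instead takes $\Omega=\bigcap_{k}\bigcup_{j\ge k}h^j(\delta(S^1))$ and handles the basepoint bookkeeping by fixing a vertex and passing to a further power.
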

\begin{proof}
Since $(\bar f_{\Theta})_*$ is not fully irreducible, there exists $n > 0$ for which we may choose a vertex $z\in \bar{\Theta}$ with $\bar{f}^n_{\Theta}(z) = z$ and free factor $H$ of $\pi_1(\bar\Theta) = \pi_1(\bar\Theta,z)$ such that $(\bar f^n_\Theta)_*(H)$ is conjugate to $H$.

Let $p\colon (\widetilde \Omega,\tilde z) \to (\bar \Theta,z)$ denote the cover corresponding to $H$ and choose a vertex $\tilde{z}'\in p\inv(z)$ so that $p_*(\pi_1(\widetilde \Omega,\tilde{z}')) = (\bar f^n_{\Theta})_*(H)$.  Basic covering space theory guarantees that there is a unique lift $h \colon \widetilde \Omega \to \widetilde \Omega$ of $\bar f_\Theta^n$ sending $\tilde{z}$ to $\tilde{z}'$.  Let $\gamma \colon S^1 \to \widetilde \Omega$ be any non-null-homotopic closed curve. Since $(\bar f_{\Theta})_*$ is hyperbolic, the sequence of curves $h^k \circ \gamma$ is an infinite sequence of distinct homotopy classes. Tightening each curve in the sequence gives an infinite sequence of curves $\{\gamma_k\}$ in the Stallings core $\bar \Omega \subset \widetilde \Omega$, each without backtracking, representing distinct homotopy classes.  Furthermore, since neither $h$ nor tightening can increase the number of illegal turns in a loop of $\bar\Omega$, the number of illegal turns of $\gamma_k$ is uniformly bounded as $k\to \infty$. 
It follows that the length of the maximal legal segment of $\gamma_k$ must tend to infinity with $k$.  From a sufficiently long legal segment we can construct a legal loop $\delta$ contained in $\bar \Omega$.  The loops $h^k \circ \delta$ must be legal for all $k > 0$, and hence must be contained in the core of $\bar \Omega$.  It follows that
\[ \Omega = \bigcap_{k > 0} \bigcup_{j \geq k} h^j(\delta(S^1)) \subset \bar \Omega \]
is a nonempty subgraph of $\bar \Omega$ with at least one edge, and that $h(\Omega) = \Omega$ since
\[ h(\Omega) = h\left(\bigcap_{k > 0} \bigcup_{j \geq k} h^j(\delta(S^1))\right) = \bigcap_{k > 0} \bigcup_{j \geq k} h^{j+1}(\delta(S^1)) = \Omega.\]

Next let $\ell > 0$ be such that $h^\ell$ has a fixed vertex $w \in \Omega$.  Thus $h^\ell$ is a lift of $\bar f_{\Theta}^{n\ell}$, and $\bar f_{\Theta}^{n \ell}$ fixes the image $v \in \bar \Theta$ of $w$.  By Hall's Theorem (i.e.~separability of finitely generated subgroups of free groups), there are covering maps
\[ \widetilde \Omega \to \Delta \to \bar \Theta \]
such that $\Delta \to \bar \Theta$ is a finite sheeted covering, and so that $\widetilde \Omega \to \Delta$ restricts to an embedding on $\Omega$.  We use this fact to identify $\Omega$ and the point $w$ with their images in $\Delta$, noting that $\Omega \subset \Delta$ is a proper subgraph containing at least one edge.

Finally, choose a power $\bar f_{\Theta}^{jn\ell}$ such that $(\bar f_{\Theta}^{jn\ell})_*$ fixes the image of $\pi_1(\Delta,w)$ in $\pi_1(\bar \Theta,v)$.  By covering space theory again, we may choose a lift $g \colon \Delta \to \Delta$ of $\bar f_{\Theta}^{jn\ell}$ fixing the image of $w$ in $\Delta$.  It follows that the restriction of $g$ to $\Omega$ agrees with the restriction of $h^{j\ell}$ to $\Omega$ (via the identification from the covering $\widetilde \Omega \to \Delta$).  In particular, $g(\Omega) =\Omega$.
\end{proof}

As in \cite{DKL2}, we may reparameterize the semi-flow on $X$ so that the first return map to $\Theta$ is the time-one map.  Applying Proposition~\ref{P:flomotopy-equivalence} (to the maps $\bar\Theta\to \Theta$ and $\Theta\to\bar\Theta$ provided by Theorem~\ref{T:promoting train track maps}), we get flow-homotopy inverse maps $\alpha$ and $\beta$ between the mapping torus $M_{\bar f_{\Theta}}$ and $X$.  Note that these maps restrict to graph maps between $\bar \Theta$ and $\Theta$.

Let $\Delta \to \bar \Theta$ be the finite sheeted cover, $g \colon \Delta \to \Delta$ the lift of a power of $\bar f_{\Theta}$, and $\Omega \subset \Delta$ the proper subgraph with at least one edge and $g(\Omega) = \Omega$, all from the claim.  By Proposition~\ref{P:graph lift to cover}, there is a cover $p \colon \widetilde M_{\bar f_{\Theta}} \to M_{\bar f_{\Theta}}$ so that $p$ restricted to a component of $p^{-1}(\bar \Theta)$ is isomorphic to $\Delta \to \bar \Theta$.  Proposition~\ref{P:flomotopy covers} then provides flow-homotopy inverse lifted maps to a cover $\widetilde X$ of $X$, denoted $\widetilde \alpha$ and $\widetilde \beta$.  Letting $\widetilde \Gamma$ denote a component of the preimage of $\Gamma$ in $\widetilde X$, we have the following diagram:

\[ \xymatrix{ \Delta \ar[r] \ar[d] & \widetilde M_{\bar f_{\Theta}} \ar@/^/^{\widetilde \alpha}[r] \ar[d] & \widetilde X \ar@/^/^{\widetilde \beta}[l] \ar[d] & \widetilde \Gamma \ar[l] \ar[d] \\
\bar \Theta \ar[r] & M_{\bar f_{\Theta}} \ar@/^/^\alpha[r] & X \ar@/^/^\beta[l] &  \Gamma \ar[l] }\]
Let $\Psi$ and $\psi$ denote the flows on $\widetilde M_{\bar f_{\Theta}}$ and $\widetilde X$, respectively, and let $K > 0$ be so that $\widetilde \beta \widetilde \alpha = \Psi_K$ and $\widetilde\alpha \widetilde\beta = \psi_K$.  Note that this implies $\widetilde \alpha$ and $\widetilde \beta$ are surjective, since $\Psi_K$ and $\psi_K$ are.

There is a proper, flow invariant subset $Z_\Omega \subset \widetilde M_{\bar f_{\Theta}}$ defined by
\[ Z_\Omega = \bigcup_{t \geq 0} \Psi_t (\Omega).\]
Since the first return of $\Psi$ to $\Delta$ is $g$, which is surjective, and since $g(\Omega) = \Omega$, it follows that $\Psi_t(Z_\Omega) = Z_\Omega$ and $\Psi_t(\widetilde M_{\bar f_{\Theta}}) \neq Z_\Omega$, for every $t \geq 0$.
Now flow equivariance implies
\[ \psi_t(\widetilde \alpha(Z_\Omega)) =  \widetilde \alpha(\Psi_t(Z_\Omega)) = \widetilde \alpha(Z_\Omega) \]
Furthermore, suppose that $\psi_t(\widetilde X) = \widetilde \alpha(Z_\Omega)$ for some $t$.  Then surjectivity and equivariance of $\widetilde \beta$ implies
\[ Z_\Omega = \Psi_K(Z_\Omega) = \widetilde \beta(\widetilde \alpha(Z_\Omega)) = \widetilde \beta(\psi_t(\widetilde X)) = \Psi_t(\widetilde \beta(\widetilde X)) = \Psi_t(\widetilde M_{\bar f_{\Theta}}) \neq Z_\Omega , \]
a contradiction.  Therefore, $\psi_t(\widetilde X) \neq \widetilde \alpha(Z_\Omega)$ for all $t \geq 0$.

Since $\alpha$ sends edges of $\bar \Theta$ to edges of $\Theta$, we see that $\widetilde \alpha$ sends edges of $\Delta$ to edges of the preimage of $\Theta$ in $\widetilde X$.  It follows that $\widetilde \alpha(Z_\Omega)$ contains an open subset of a $2$--cell of $\widetilde X$ and thus that  $\widetilde \alpha(Z_\Omega)$ eventually flows over an entire edge $e$ of the component $\widetilde \Gamma$ of the preimage of $\Gamma$.
Now we note that the first return map to $\widetilde \Gamma$ is a lift of a power of $f$ by Proposition~\ref{P:cover to graph lift}.  Denote this first return map $r: \widetilde \Gamma\to \widetilde \Gamma$. Thus $r$ is a train track map. 

A result of Bestvina-Feighn-Handel~\cite[Proposition 2.4]{BFH97}, or alternatively, a recent result of Dowdall and Taylor~\cite[Proposition~5.1]{DT16} imply that if a hyperbolic fully irreducible automorphism of $F_N$ preserves a subgroup of finite index in $F_N$, then the restriction of the automorphism to that subgroup induces a fully irreducible automorphism of the subgroup. 

Therefore $r$ induces a fully irreducible automorphism of $\pi_1(\widetilde \Gamma)$, and being a train track representative of that automorphism, $r$ is an expanding irreducible train track map (e.g. by \cite[Lemma 2.4]{K14}). 
In particular, the edge $e$ must eventually map over the entire graph $\widetilde \Gamma$ by some power of the first return map $r$.   It follows that the $\flow$--invariant subset $\widetilde \alpha(Z_\Omega)$ contains $\widetilde \Gamma$.  But since $\widetilde \Gamma$ is a section of $\psi$, this implies that $\widetilde \alpha(Z_\Omega) = \widetilde X$, which is a contradiction.
\end{proof}

\bibliography{fbc3}{}
\bibliographystyle{alphanum}

\end{document}